\newcommand{\R}{\mathbb R}
\newcommand{\N}{\mathbb N}
\newcommand{\E}{\mathbb E}
\newcommand{\Pro}{\mathbb P}
\newtheorem{thm}{Theorem}[section]
\newtheorem{cor}{Corollary}[section]
\newtheorem{lemma}{Lemma}[section]
\newtheorem{proposition}{Proposition}[section]
\newtheorem{conjecture}{Conjecture}[section]
\theoremstyle{remark}
\newtheorem*{rmk}{Remark}
\DeclareMathOperator{\signum}{sgn}
\begin{document}


\title{The variance conjecture on hyperplane projections of the $\ell_p^n$
balls}

\author[D.\,Alonso]{David Alonso-Guti\'errez}
\address{\'Area de an\'alisis matem\'atico, Departamento de matem\'aticas, Facultad de Ciencias, Universidad de Zaragoza, Pedro cerbuna 12, 50009 Zaragoza (Spain), IUMA}
\email{alonsod@unizar.es}

\author[J.\,Bastero]{Jes\'us Bastero}
\address{\'Area de an\'alisis matem\'atico, Departamento de matem\'aticas, Facultad de Ciencias, Universidad de Zaragoza, Pedro cerbuna 12, 50009 Zaragoza (Spain), IUMA}
\email[(Jes\'us
Bastero)]{bastero@unizar.es}
\subjclass[2010]{Primary 52B09, Secondary 52A23}
\thanks{Partially supported by Spanish grants MTM2013-42105-P, DGA E-64, and P1·1B2014-35
projects}
\begin{abstract}
We show that for any $1\leq p\leq\infty$, the family of random vectors uniformly distributed on hyperplane projections
of the unit ball of $\ell_p^n$ verify the variance conjecture
$$
\textrm{Var}\,|X|^2\leq C\max_{\xi\in S^{n-1}}\E\langle X,\xi\rangle^2\E|X|^2,
$$
where $C$ depends on $p$ but not on
the dimension $n$ or the hyperplane. We will also show a general result relating the variance conjecture for a random vector uniformly distributed on an isotropic convex body and the variance
conjecture for a random vector uniformly distributed on any Steiner symmetrization of it. As a consequence we will have that the class of random vectors uniformly distributed on any Steiner symmetrization of an $\ell_p^n$-ball verify the variance conjecture.
\end{abstract}

\date{\today}
\maketitle
\section{Introduction and notations}

A probability measure $\mu$ on $\R^n$ is said to be log-concave if it has a density with
respect to the Lebesgue measure
$$
d\mu(x)=e^{-V(x)}dx,
$$
where $V:\R^n\to(-\infty,\infty]$ is a convex function. For instance, the
uniform probability measure on a convex body and the Gaussian measure are
examples of log-concave probabilities on $\R^n$. A log-concave random vector $X$ is a random vector in $\R^n$ distributed according to a log-concave probability measure. A log-concave random vector $X$ is called
isotropic if the following two conditions hold:
\begin{itemize}
\item The barycenter is at the origin, i.e., $\E X=0$,
\item The covariance matrix is the identity $I_n$, i.e. $\E\langle
X,e_i\rangle\langle X,e_j\rangle=\delta_{i,j}$,
\end{itemize}
where $\{e_i\}_{i=1}^n$ denotes the canonical basis in $\R^n$, $\delta_{i,j}$ is
the Kronecker delta, $\langle\cdot,\cdot\rangle$ is the usual scalar product in
$\R^n$, and $\E$
denotes the expectation. We will also denote by $\textrm{Var}$ the variance. It is well known that for any log-concave random vector $X$
there exists an affine map $T$, with non-zero determinant, such that $TX$ is isotropic. If $X$ is centered then $T$ is non-degenerate linear map $T\in GL(n)$.

Given a centered log-concave random vector $X$, we will denote by
$\lambda_X^2$ the largest eigenvalue of its covariance matrix $M_X$
$$
\lambda_X^2=\Vert M_X\Vert_{\ell_2^n\to\ell_2^n}=\max_{\xi\in
S^{n-1}}\E\,\langle X,\xi\rangle^2,
$$
where $S^{n-1}$ denotes the Euclidean unit sphere in $\R^n$.

The variance conjecture was considered by Bobkov and Koldobsky in the context of
the Central Limit Problem for isotropic convex bodies (see \cite{BK}) and it
states the following:
\begin{conjecture}\label{VarianceIsotropic}
There exists an absolute constant $C$ such that for every isotropic log-concave
random vector $X$
$$
\textrm{Var}\,|X|^2\leq C\E\,|X|^2=Cn.
$$
\end{conjecture}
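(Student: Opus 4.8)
Conjecture~\ref{VarianceIsotropic} is the central unresolved problem in this circle of ideas, so any strategy is ultimately conditional on progress on a deeper isoperimetric question; nevertheless the natural route is the following. First I would reduce the conjecture to a one--dimensional concentration statement. Writing $|X|^2-n=(|X|-\sqrt n)(|X|+\sqrt n)$ and using $(|X|+\sqrt n)^2\ge n$ gives at once the trivial lower bound $\textrm{Var}\,|X|^2\ge n\,\textrm{Var}\,|X|$. For the reverse comparison I would split according to whether $|X|\le 2\sqrt n$ or not: on the first event $(|X|+\sqrt n)^2\le 9n$, and the contribution of the second event is negligible because an isotropic log--concave $X$ has exponentially small tails ($\Pro(|X|\ge t\sqrt n)\le e^{-ct\sqrt n}$ for $t$ large, Paouris) together with bounded normalized moments ($\E|X|^4\le Cn^2$, Borell); combined with $\E|X|^2=n$ and $(\E|X|)^2=n-\textrm{Var}\,|X|$, so that the barycentric correction $(\sqrt n-\E|X|)^2$ is of lower order, this yields $\textrm{Var}\,|X|^2\le C\,n\,\textrm{Var}\,|X|$. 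Hence the variance conjecture is equivalent to the thin--shell estimate $\textrm{Var}\,|X|\le C$ for every isotropic log--concave $X$, with $C$ absolute.

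Next I would attack the thin--shell estimate by running a martingale of log--concave measures. Starting from the density of $X$, Eldan's stochastic localization produces measures $\mu_t\propto e^{\langle\theta_t,x\rangle}\,d\mu_0(x)$, with $\mu_0$ the law of $X$, each $\mu_t$ log--concave, $\mu_t$ collapsing to a point mass as $t\to\infty$, and barycenters $a_t$ and covariances $A_t$ satisfying a controlled system of stochastic differential equations, $da_t=A_t\,dW_t$ and $dA_t=-A_t^2\,dt+(\text{a martingale term})$. Since $\E_{\mu_t}|X|^2=|a_t|^2+\mathrm{tr}(A_t)$, tracking the functional $t\mapsto\textrm{Var}_{\mu_t}|x|^2$ along the flow and integrating reduces $\textrm{Var}\,|X|$ to an a priori estimate on $\Vert A_t\Vert_{\mathrm{op}}$ along the process --- roughly, to the statement that $A_t$ stays comparable to the identity up to a time of order one, before the measure has concentrated.

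The crux --- and the reason the statement is still a conjecture --- is precisely this last estimate. Controlling $\Vert A_t\Vert_{\mathrm{op}}$ uniformly in $n$ is essentially equivalent to the Kannan--Lov\'asz--Simonovits isoperimetric problem, for which only bounds of the form $\textrm{Var}\,|X|\le n^{\alpha}$ with some $\alpha>0$ are available (Gu\'edon--Milman and subsequent refinements), so this route yields at best $\textrm{Var}\,|X|^2\le Cn^{1+\alpha}$ instead of the conjectured $O(n)$. The naive shortcut, the Poincar\'e bound $\textrm{Var}\,|X|^2\le 4\,C_P(X)\,\E|X|^2$ coming from $\nabla(|x|^2)=2x$, merely transfers the difficulty to the spectral gap $C_P(X)$; and a Brascamp--Lieb type $L^2$ estimate --- which would be ideal, since the Hessian $\nabla^2(|x|^2)=2I_n$ is as favorable as it gets --- fails because an isotropic log--concave density need not be uniformly convex (think of the uniform measure on a polytope, where $V$ is constant on the interior). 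A genuinely new input therefore seems necessary: either a dimension--free control of the localization dynamics, or an $L^2$ argument exploiting the special structure of $|x|^2$ --- for instance splitting $|X|^2-n$ into a radial part $\sqrt n\,(|X|-\E|X|)$ and a complementary spherical part and estimating the latter by using the log--concavity of the density more efficiently than a generic Poincar\'e inequality allows. Finding such an argument is, at present, the main obstacle.
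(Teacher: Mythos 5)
You were asked about Conjecture \ref{VarianceIsotropic}, i.e.\ the Bobkov--Koldobsky variance conjecture itself, and the paper does not prove it: it only establishes special cases (random vectors uniformly distributed on hyperplane projections of $B_p^n$ in Theorem \ref{projectionsB_p^n}, and the transfer statements for hyperplane projections and Steiner symmetrizations in Section \ref{proofSteiner}). So there is no proof in the paper to compare yours against, and you are right not to claim one: what you wrote is a programme, not a proof, and it is honest about that. Within that frame your reductions are essentially sound and consistent with the literature the paper cites. The elementary bound $\textrm{Var}\,|X|^2\geq n\,\textrm{Var}\,|X|$ is correct; the reverse comparison on the event $\{|X|\leq 2\sqrt n\}$, with Paouris' deviation inequality and Borell's lemma controlling the tail and the identity $(\E|X|)^2=n-\textrm{Var}\,|X|$ showing that the centering error $(\sqrt n-\E|X|)^2\leq\textrm{Var}\,|X|$ is harmless, does give the well-known equivalence between the variance conjecture and the thin-shell estimate $\textrm{Var}\,|X|\leq C$; and the stochastic-localization route together with its obstruction (dimension-free operator-norm control of the covariance along the flow, which is essentially the KLS problem) is exactly the picture in \cite{E}, with \cite{GM} the source of the polynomial loss quoted in the introduction.

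Two cautions. First, the clean inequality $\textrm{Var}\,|X|^2\leq Cn\,\textrm{Var}\,|X|$ as you state it requires absorbing the tail contribution, of size at most $Cn^2e^{-c\sqrt n}$, into $n\,\textrm{Var}\,|X|$; this is immediate if you only want the equivalence of the two conjectures (since $n^2e^{-c\sqrt n}\leq Cn$), but the pointwise form implicitly needs a lower bound on $\textrm{Var}\,|X|$, which you should either justify or sidestep by phrasing the step as an equivalence up to an additive $O(n)$ term. Second, and more importantly, nothing in the second and third paragraphs closes the gap you yourself identify: controlling $\Vert A_t\Vert_{\mathrm{op}}$ uniformly in the dimension is the open problem, so the proposal terminates exactly where the conjecture begins. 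That is the correct assessment of the state of affairs as of this paper; if you want to engage with what the paper actually proves, the places to look are the probabilistic representation of the cone measure on $\partial B_p^n$, the Orlicz-norm estimates for $\E\psi_\theta$ in Section \ref{ProbabilisticEstimates}, and the Steiner-symmetrization transfer in Section \ref{proofSteiner}.
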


It was conjectured before by Antilla, Ball, and Perissinaki (see \cite{ABP})
that for an isotropic log-concave random vector $X$, $|X|$ is highly
concentrated in a ``thin shell'' more than the trivial bound
$\textrm{Var}\,|X|\leq\E\,|X|^2$ suggests.

The variance conjecture is a particular case of a stronger conjecture, due to
Kannan, Lov\'asz, and Simonovits (see \cite{KLS}), concerning the spectral gap of
log-concave probability measures. This conjecture can be stated in the following way
due to the work of Cheeger, Maz'ya and Ledoux, among others:
\begin{conjecture}\label{KLSConjecture}
There exists an absolute constant $C$ such that
for any centered log-concave random vector $X$ and for any locally Lipschitz
function $g:\R^n\to\R$ such that the random variable $g(X)$ has finite variance
$$
\textrm{Var}\,g(X)\leq C\lambda_X^2\E\,|\nabla g(X)|^2.
$$
\end{conjecture}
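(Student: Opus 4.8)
The statement to be established is the Kannan--Lov\'asz--Simonovits conjecture \cite{KLS} itself, one of the central open problems of asymptotic convex geometry, so rather than a complete argument what follows is a description of the standard chain of reductions by which one attacks it, together with an identification of the step where the genuine (and still unresolved) difficulty is concentrated.

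\emph{Step 1: reduction to isotropic convex bodies.} First I would pass to uniform measures on convex bodies. Truncating the density $e^{-V}$ at high levels and considering its subgraph $\{(x,t)\in\R^{n+1}:0\le t\le e^{-V(x)}\}$ --- a convex body whose uniform measure projects onto the law of $X$ --- is the standard device showing that it suffices to prove the inequality, with an absolute constant, for $X$ uniformly distributed on a convex body $K$. Since $X$ is centered one may then apply a linear map so that $K$ has covariance the identity, i.e.\ $\lambda_X=1$, and the claim reduces to $\textrm{Var}\,g(X)\le C\,\E|\nabla g(X)|^2$. At this point linear test functions are trivial: for $g(x)=\langle x,\xi\rangle$ one has $\textrm{Var}\,g(X)=\E\langle X,\xi\rangle^2\le\lambda_X^2=\lambda_X^2\,\E|\nabla g(X)|^2$, so the content lies entirely in nonlinear $g$, the canonical model being $g=|\cdot|^2$.

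\emph{Step 2: reduction to the variance conjecture.} Next one separates the geometry of the body from the test function. The localization technique of \cite{KLS}, in Eldan's stochastic-localization form, reduces the inequality to a one-dimensional Poincar\'e inequality along ``needles''; for linear $g$ this is the classical one-dimensional log-concave Poincar\'e inequality (in the spirit of Payne--Weinberger), which is why the linear case is free, whereas for general $g$ one must in addition control the variances of the needle measures --- precisely the thin-shell phenomenon anticipated in \cite{ABP}. Quantitatively, Eldan's theorem shows that a uniform bound $\textrm{Var}\,|X|^2\le C\,\E|X|^2$ over all isotropic log-concave vectors in every dimension --- that is, Conjecture~\ref{VarianceIsotropic} --- implies the spectral gap of Conjecture~\ref{KLSConjecture} up to a logarithmic factor in the dimension. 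Thus, modulo that logarithm, the statement to be proved is equivalent to the variance conjecture.

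\emph{Step 3: the main obstacle.} The residual task --- establishing $\textrm{Var}\,|X|^2=O(n)$ for \emph{all} isotropic log-concave vectors, and removing the logarithmic loss of Step~2 --- is where the genuine difficulty sits, and it is open. The available method is to run Eldan's stochastic localization, follow the covariance matrix $A_t$ of the evolving measure, and bound $\E\|A_t\|$; this produces polynomial estimates on $\textrm{Var}\,|X|^2$, of order $n^{5/3}$ in the work of Gu\'edon and E.~Milman and progressively smaller --- down to $n^{1+o(1)}$, and further to $n$ times polylogarithmic factors --- in subsequent developments, but closing the gap to an absolute multiple of $n$ would require controlling the essentially quadratic growth of the covariance along the flow, which no present technique achieves. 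In short, Steps~1 and~2 are routine and reduce the problem to Conjecture~\ref{VarianceIsotropic} at the cost of a logarithm, while Step~3 is the hard core and coincides with the open part of the conjecture. What the present paper in fact establishes is an \emph{unconditional} version of Step~3, with a $p$-dependent constant, for the uniform measures on hyperplane projections of $B_p^n$ and, via a Steiner-symmetrization argument, on arbitrary Steiner symmetrizations of $B_p^n$.
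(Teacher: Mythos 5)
You have not proved the statement, and neither does the paper: Conjecture~\ref{KLSConjecture} is quoted there purely as background (it is the Kannan--Lov\'asz--Simonovits conjecture, still open), and the paper's actual contributions are only the special cases of the \emph{general variance} conjecture for hyperplane projections of $B_p^n$ (Theorem~\ref{projectionsB_p^n}) and the transference results for projections and Steiner symmetrizations (Theorems~\ref{projections} and~\ref{SteinerSymmetrization}). So there is no proof in the paper to compare yours against, and your submission --- which you candidly present as a chain of reductions ending at an acknowledged open problem --- cannot be accepted as a proof of the statement. As a survey your account is essentially accurate: linear test functions are indeed trivial after normalizing the covariance; Eldan's stochastic localization does show that a uniform thin-shell/variance bound (Conjecture~\ref{VarianceIsotropic}) implies the full spectral-gap conjecture up to a logarithmic factor, which matches what the paper itself cites from \cite{E}; and the best-known unconditional bounds (Gu\'edon--Milman and later refinements) fall short of the conjectured $O(n)$ for $\textrm{Var}\,|X|^2$, exactly as you say.

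One technical slip in your Step~1 is worth flagging: the subgraph $\{(x,t):0\le t\le e^{-V(x)}\}$ is \emph{not} a convex body, since $e^{-V}$ is log-concave rather than concave, so it cannot serve directly as the reduction device. The standard reduction from log-concave measures to uniform measures on convex bodies goes through higher-dimensional constructions (e.g.\ bodies of the form $\{(x,y)\in\R^n\times\R^N : \Vert y\Vert\le (c_N f(x))^{1/N}\}$, whose marginals approximate the given log-concave density as $N\to\infty$), or one simply works with log-concave measures throughout, as the localization and stochastic-localization arguments allow. This does not affect your overall (correct) assessment that Steps~1--2 are known reductions and Step~3 is the open core, but as written that particular sentence is false.
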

 Notice that Conjecture \ref{VarianceIsotropic} is the particular case of
Conjecture \ref{KLSConjecture} when $g(X)=|X|^2$ and $X$ is isotropic. One can
also consider the particular case in which $g(X)=|X|^2$ but $X$ is not
necessarily isotropic. This gives the following general variance conjecture

\begin{conjecture}\label{VarianceGeneral}
There exists an absolute constant $C$ such that for every centered log-concave
random vector $X$
$$
\textrm{Var}\,|X|^2\leq C\lambda_X^2\E\,|X|^2.
$$
\end{conjecture}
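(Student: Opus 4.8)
The plan is to approach Conjecture~\ref{VarianceGeneral} through the two standard routes to spectral-gap estimates, the Brascamp--Lieb variance inequality and the localization method, arranged so that the genuinely hard step is isolated. First I would normalise: after an orthogonal change of coordinates the covariance matrix is diagonal, $M_X=\operatorname{diag}(\sigma_1^2,\dots,\sigma_n^2)$, so that $\lambda_X^2=\max_i\sigma_i^2$ and $\E|X|^2=\sum_{i=1}^n\sigma_i^2$, and expanding the square,
$$
\textrm{Var}\,|X|^2=\sum_{i,j=1}^n\textrm{Cov}(X_i^2,X_j^2).
$$
Were the coordinates of $X$ independent, only the diagonal terms would survive, and since a one-dimensional log-concave variable $Y$ satisfies the reverse-H\"older bound $\E Y^4\le C(\E Y^2)^2$, one would get $\textrm{Var}\,|X|^2=\sum_i\textrm{Var}\,X_i^2\le C\sum_i\sigma_i^4\le C\lambda_X^2\E|X|^2$. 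Thus the conjecture is exactly a quantitative decorrelation statement for the squares $X_i^2$, and everything is in controlling the off-diagonal covariances.

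For the analytic route, write the law of $X$ as $d\mu=e^{-V}dx$ and, for $\varepsilon>0$, let $\mu_\varepsilon$ be the measure with the strictly convex potential $V_\varepsilon=V+\tfrac\varepsilon2|x|^2$. Applying the Brascamp--Lieb inequality to $f(x)=|x|^2$, whose gradient is $2x$, gives
$$
\textrm{Var}_{\mu_\varepsilon}|X|^2\le 4\,\E_{\mu_\varepsilon}\big\langle(\nabla^2 V_\varepsilon)^{-1}X,X\big\rangle ,
$$
so that, letting $\varepsilon\to0$, it suffices to prove
$$
\E\big\langle(\nabla^2 V)^{-1}X,X\big\rangle\le C\,\lambda_X^2\,\E|X|^2 .
$$
At this point the integration-by-parts identity $\int(\partial_iV)\,x_j\,d\mu=\delta_{ij}$, i.e.\ $\int\nabla V\otimes x\,d\mu=I_n$, together with Cauchy--Schwarz in the inner product induced by $\nabla^2V$, lets one trade the inverse Hessian for first-order information, and the clean sufficient statement becomes the operator inequality $\int(\nabla^2V)^{-1}\,d\mu\preceq C\,\lambda_X^2\,I_n$. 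A parallel, more combinatorial route replaces all of this by the localization lemma of Lov\'asz and Simonovits: with $h(x)=|x|^2-\E|X|^2$ and the constraint $\int h\,d\mu=0$, the inequality $\int h^2\,d\mu\le C\lambda_X^2\int|\nabla h|^2\,d\mu$ reduces to a one-dimensional log-concave measure supported on a segment, where the fourth-moment bound above applies directly and the only remaining issue is to match the one-dimensional spectral gap of the needle against $\lambda_X^2$.

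The step I expect to be the main obstacle is precisely the one left open in each route, and the two are the same wall. In the Brascamp--Lieb route, $\nabla^2V$ carries no a priori lower bound and may degenerate, so $(\nabla^2V)^{-1}$ is uncontrolled pointwise, and establishing $\int(\nabla^2V)^{-1}\,d\mu\preceq C\lambda_X^2 I_n$ is essentially as strong as the conjecture itself. In the localization route, the needle decomposition produced by a quadratic test function need not be adapted to the eigendirection realising $\lambda_X^2$, and iterating the estimate down the binary tree of needles accumulates a dimension-dependent factor in place of an absolute constant; controlling that loss is the crux. The strongest tool presently available for it, Eldan's stochastic localization, yields the inequality only with $\lambda_X^2$ multiplied by a slowly growing factor, and obtaining the absolute constant --- equivalently, the uniform control of the inverse Hessian above --- is the hard part of the problem.
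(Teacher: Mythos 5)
The statement you were asked about is Conjecture~\ref{VarianceGeneral}, and it is exactly that: an open conjecture. The paper does not prove it and does not claim to; it only establishes it for specific families, namely random vectors uniformly distributed on hyperplane projections of $B_p^n$ (Theorem~\ref{projectionsB_p^n}, with a $\log(1+p)$ loss for $p\leq n$) and on Steiner symmetrizations of isotropic bodies on which the conjecture already holds (Theorem~\ref{SteinerSymmetrization}), while the introduction records that the best general results carry extra dimensional factors (Gu\'edon--Milman's $n^{2/3}$ for Conjecture~\ref{VarianceIsotropic}, Eldan's polylogarithmic reduction of KLS to thin-shell). Your text is therefore not a proof but a survey of two standard attack routes, and you yourself concede that in each route the decisive step is ``essentially as strong as the conjecture itself''; a proposal whose crux is left open has a genuine gap by construction, and no amount of rearrangement of Brascamp--Lieb or localization around it closes that gap.

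Beyond the global issue, the analytic route as written breaks down precisely on the measures this paper studies. For $X$ uniform on a convex body $K$ (e.g.\ $P_H(B_p^n)$), the potential $V$ is $0$ on $K$ and $+\infty$ outside, so $\nabla^2V\equiv 0$ on the interior of the support: $(\nabla^2V)^{-1}$ does not exist, and your regularized bound $\textrm{Var}_{\mu_\varepsilon}|X|^2\leq 4\,\E_{\mu_\varepsilon}\langle(\nabla^2V_\varepsilon)^{-1}X,X\rangle$ only gives $\frac{4}{\varepsilon}\E_{\mu_\varepsilon}|X|^2$, which blows up as $\varepsilon\to0$; the ``clean sufficient statement'' $\int(\nabla^2V)^{-1}d\mu\preceq C\lambda_X^2 I_n$ is vacuous here rather than merely hard. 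The localization route likewise stalls at matching needle spectral gaps to $\lambda_X^2$ without dimensional loss, which is the content of the KLS conjecture itself. By contrast, the paper sidesteps general spectral-gap machinery entirely: for projections of $B_p^n$ it uses the cone-measure/Cauchy-formula representation of the projected uniform measure via the weight $\psi_\theta$, expands $\textrm{Var}\,|X|^2$ into diagonal and off-diagonal blocks, and controls the off-diagonal block by symmetrization ($g_i^2-\bar g_i^2$), Khintchine-type inequalities, and Orlicz-norm estimates for $\E\psi_\theta$ (Lemmas~\ref{ExpectationPsi}--\ref{BoundLessTerms}); that hands-on computation is what replaces the wall you ran into, at the price of working only for this specific family. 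Your opening reduction (diagonalize $M_X$, note that independence plus the one-dimensional reverse H\"older bound would settle the diagonal part, so the problem is decorrelation of the $X_i^2$) is correct and is indeed the same philosophy the paper implements, but the decorrelation step is exactly what must be, and in your write-up is not, proved.
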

This general variance conjecture was considered before in \cite{AB1}, where it
was shown that uniform probability measures on hyperplane projections of $B_1^n$
and $B_\infty^n$ (the unit balls of $\ell_1^n$ and $\ell_\infty^n$) verify it. In the particular case that we consider $X$ isotropic this conjecture becomes  Conjecture \ref{VarianceIsotropic}. However, it is not clear whether these conjectures are equivalent since the general case is not deduced from the isotropic case because we are considering only the function $g(X)=|X|^2$. Some estimates for the constant in Conjecture \ref{VarianceGeneral}, when considering linear deformations of isotropic random vectors verifying Conjecture \ref{VarianceIsotropic} were given in \cite{AB1} and \cite{AB2}.

Not many examples are known to verify these conjectures.
Conjecture \ref{KLSConjecture} is known to be true for a Gaussian random vector and
random vectors uniformly distributed on
the $\ell_p^n$-balls, some revolution bodies, the simplex, and, with an extra
$\log n$ factor, on unconditional
bodies and log-concave probabilities with many symmetries (see \cite{BaC},
\cite{BaW}, \cite{B}, \cite{H}, \cite{K}, \cite{LW}, \cite{S}).
The best general known result in Conjecture \ref{KLSConjecture} adds a factor
$n^\frac{2}{3}(\log n)^2$ and is due to
Gu\'edon-Milman, who proved the best known estimate in Conjecture
\ref{VarianceIsotropic} with an extra factor
$n^\frac{2}{3}$ (see \cite{GM}), and Eldan, who proved that the variance
conjecture implies the Kannan-Lov\'asz-Simonovits
conjecture, up to a logarithmic factor (see \cite{E}). Besides, Conjecture
\ref{VarianceGeneral} (and thus, \ref{VarianceIsotropic})
is true for random vectors uniformly distributed on unconditional bodies \cite{K} and, as mentioned before,
hyperplane projections of $B_1^n$ and
$B_\infty^n$ (see \cite{AB1}), and increments of log-concave martingales (see \cite{CG}). For more information on these conjectures and
their relation with some other problems
in asymptotic convex geometry we also refer the reader to the monographs
\cite{BGVV}  and
\cite{AB2}.

In this paper we approach the study of the general variance conjecture for the
class of random vectors uniformly distributed on projections of $B_p^n$, the unit balls
of $\ell_p^n$ $1<p<\infty$, onto $(n-1)$-dimensional subspaces $H=\theta^\perp$,
extending the results obtained for $p=1,\infty$ in \cite{AB1}. Namely, we will
prove the following
\begin{thm}\label{projectionsB_p^n}
There exists an absolute constant $C$ such that for any hyperplane
$H=\theta^\perp$, with $\theta\in S^{n-1}$, if $X$ is a random vector uniformly distributed on $P_H(B_p^n)$ we have that if $p\leq n$
$$
\textrm{Var}\,|X|^2\leq C\log(1+p)\lambda_X^2\E\,|X|^2
$$
and if $p>n$
$$
\textrm{Var}\,|X|^2\leq C\lambda_X^2\E\,|X|^2.
$$

Furthermore, if $1\leq p\leq n$ the set of vectors $\theta\in S^{n-1}$ such that
$$
\textrm{Var}\,|X|^2\leq C\lambda_X^2\E\,|X|^2
$$
has Haar probability measure greater than $1-\frac{1}{2^n}$.
\end{thm}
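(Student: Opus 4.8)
Since $\textrm{Var}\,|X|^2=\E|X|^4-(\E|X|^2)^2$ and $\E X=0$ (the body $K:=P_H(B_p^n)$ being centrally symmetric), the statement only involves the second and fourth moments of $|X|$ for $X$ uniform on $K$. A first simplification: $\lambda_X^2\geq\frac1n\E|X|^2$, because the largest eigenvalue of the covariance matrix dominates its normalized trace, so it suffices to prove the thin-shell type bound
$$
\textrm{Var}\,|X|^2\leq C\log(1+p)\,\frac{(\E|X|^2)^2}{n},
$$
with the factor $\log(1+p)$ to be removed in the two sharper regimes. A second one: as $B_p^n$ is invariant under permutations and sign changes of coordinates, $K$ is congruent to the projection of $B_p^n$ onto $(\theta')^\perp$ for any $\theta'$ obtained from $\theta$ by such a map, so we may assume $\theta_1\geq\cdots\geq\theta_n\geq0$, whence $\theta_1=\|\theta\|_\infty\geq n^{-1/2}$; the argument will distinguish the ``spread'' case $\theta_1\leq c_0$, for a suitable absolute constant $c_0<1$, from the ``localized'' case $\theta_1>c_0$, where $\theta$ lies close to a coordinate axis.

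The computations are organized around Fubini's theorem in the direction $\theta$: if $\ell_\theta(y)$ is the length of the fibre $(y+\R\theta)\cap B_p^n$, then $\int_K\phi(y)\,\ell_\theta(y)\,dy=\int_{B_p^n}\phi(P_Hx)\,dx$ for every integrable $\phi$. Equivalently, the uniform measure on $K$ differs from the law of $P_HZ$, with $Z$ uniform on $B_p^n$, only through the weight $\ell_\theta^{-1}$ (up to normalization). For the law of $P_HZ$ everything is explicit, because $B_p^n$ is unconditional: its coordinates are uncorrelated with $\E Z_1^2\asymp n^{-2/p}$ and $\E Z_1^4\leq C(\E Z_1^2)^2$, so $\E|P_HZ|^2=(n-1)\E Z_1^2$, the covariance of $P_HZ$ acts on $H$ as the multiple $(\E Z_1^2)I_H$ of the identity, and --- splitting $|P_HZ|^2=|Z|^2-\langle Z,\theta\rangle^2$ and invoking the known variance conjecture for $\ell_p^n$-balls together with the standard bounds on their one-dimensional marginals --- one gets $\textrm{Var}\,|P_HZ|^2\lesssim n(\E Z_1^2)^2$, which is the required thin-shell bound for the law of $P_HZ$. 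The whole problem is therefore to transfer these estimates to the uniform measure on $K$, i.e.\ to control the effect of the weight $\ell_\theta^{-1}$.

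This transfer is the heart of the matter and the step I expect to be the main obstacle. The function $\ell_\theta$ is nonnegative and concave on $K$ and vanishes on $\partial K$; the discrepancy between $\textrm{Var}\,|X|^2$ and $\textrm{Var}\,|P_HZ|^2$ can be written as a boundary-layer integral $\frac1{\Vol_{n-1}(K)}\int_{K\setminus K_0}(|y|^2-m)^2\,dy$, where $K_0$ is the part of $K$ on which $\ell_\theta$ is comparable to its average and $m$ is the relevant mean, and one must estimate this by decomposing $K\setminus K_0$ dyadically both by the size of $\ell_\theta$ and by $|y|$. Here the shape of $B_p^n$ enters decisively: one has $|y|\leq1$ on all of $K$ when $p\leq2$ and $|y|\leq n^{1/2-1/p}$ when $p>2$, while the region of $K$ on which $\ell_\theta$ is small and $|y|$ is simultaneously large --- a neighbourhood of a projected vertex $P_H(\pm e_i)$, say --- has exponentially small volume. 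Summing the resulting estimates, together with the analogous correction to $\E|X|^2$, gives $\E|X|^2\asymp n^{1-2/p}$ and the thin-shell bound with the factor $\log(1+p)$ for a general $\theta$. In the spread case the geometry of $K$ stays close to an unconditional, essentially product-like, one --- $\ell_\theta$ is then much flatter and $K\setminus K_0$ more benign --- and the $\log(1+p)$ disappears; alternatively, in that case one can expand $|X|^2=\sum_iX_i^2$ and check directly that $\sum_{i,j}\textrm{Cov}(X_i^2,X_j^2)$ is dominated by its diagonal, the off-diagonal covariances inherited from $B_p^n$ remaining small after projection onto $H$.

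It remains to observe that the spread case is generic and that large $p$ is easy. The set $\{\theta\in S^{n-1}:\|\theta\|_\infty\leq c_0\}$ has Haar measure at least $1-2^{-n}$ once $c_0<1$ is chosen suitably: its complement lies in the union of the $n$ spherical caps $\{|\theta_i|>c_0\}$, each of Haar measure at most $C(1-c_0^2)^{(n-1)/2}$, and for $c_0$ close enough to $1$ the total is below $2^{-n}$; so for such $\theta$ the spread-case estimate applies and gives the bound without $\log(1+p)$. For $p>n$ one repeats the argument using $B_p^n\subseteq B_\infty^n\subseteq n^{1/p}B_p^n$ with $n^{1/p}\leq e$, so that $B_p^n$ agrees with the cube up to a bounded factor in every coordinate; the one-dimensional and geometric inputs then come with absolute constants, the $\log(1+p)$ loss becomes an $O(1)$ loss, and one recovers for every hyperplane the bound obtained for $B_\infty^n$ in \cite{AB1}.
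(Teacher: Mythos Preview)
Your proposal is not a proof but an outline whose central step --- the ``transfer'' from the law of $P_HZ$ to the uniform law on $K$ via the weight $\ell_\theta^{-1}$ --- is left entirely unjustified. You assert that a dyadic decomposition of the boundary layer $K\setminus K_0$ by the size of $\ell_\theta$ and of $|y|$ yields the thin-shell bound with a factor $\log(1+p)$, but you give no computation, and nothing in the sketch explains where that precise factor would come from. The paper's route is quite different and genuinely quantitative: it represents $\E f(X)$ via the cone measure on $\partial B_p^n$ as $\E f(P_H(G/S))\psi/\E\psi$ with $\psi=\bigl|\sum_i|g_i|^{p-1}\signum(g_i)\theta_i\bigr|$, exploits the independence of $G/S$ and $S$, and splits $\textrm{Var}|X|^2$ into four explicit terms. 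Three of them are bounded by $Cn^{1-4/p}$ directly; the cross term is controlled by symmetrization and H\"older, and the $\log(1+p)$ arises concretely from the ratio $(\E\psi^2)^{1/\alpha}/(\E\psi)^{2/\alpha}$ with $\alpha\sim\log p$. Your $\ell_\theta^{-1}$ picture has no visible analogue of this mechanism.

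Two further points are actual errors rather than gaps. First, your ``spread'' criterion $\|\theta\|_\infty\le c_0$ with $c_0$ close to $1$ (which is what you need for the cap bound to beat $2^{-n}$) is far too weak to force $K$ to be ``essentially product-like'': $\theta=(0.9,\sqrt{0.19},0,\dots,0)$ satisfies it and produces a projection with no unconditional or product structure whatsoever. The paper's large set is instead $\{\theta:\|\theta\|_1\ge c\sqrt n\}$, equivalently $\E\psi_\theta\sim\E\psi_{\theta_0}\sim p^{-1/2}$, and it is this control of $\E\psi_\theta$ --- not a bound on $\|\theta\|_\infty$ --- that removes the logarithm. Second, for $p>n$ the inclusion $B_p^n\subseteq B_\infty^n\subseteq n^{1/p}B_p^n$ does not let you ``recover the bound obtained for $B_\infty^n$'': two convex bodies within a bounded dilation factor can have uniform measures whose variance functionals differ wildly (the volume ratio may be as large as $e^{n}$), so the variance-conjecture constant is not stable under such comparisons. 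The paper handles $p>n$ by a separate, direct estimate of $\E\psi\bar\psi\bigl(\sum_i(g_i^2-\bar g_i^2)\bigr)^2$ combined with $\E\psi_{\theta_0}\sim\sqrt n/p$ in that range.
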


Notice that the value of the constant in the theorem depends on $p$ if $p\leq n$ and does not depend on $p$ if $p>n$.  The reason for this discontinuity in the value of the constant is just technical. Our proof gives a constant $C\log(1+p)$ for every value of $p\in[1,\infty]$ and, using a different method we were able to give a better estimate, independent of $p$, that holds for values of $p$ greater than $n$.



We would like to remark that we are considering a random vectors uniformly distributed on
projections of $B_p^n$ and not the projections of random vectors uniformly distributed on
$B_p^n$. When considering the projections of the random vectors the situation is much simpler. Even though it is probably straightforward for specialists, for the sake of completeness  we will give in Section \ref{proofSteiner} a general
result showing that an isotropic log-concave random vector verifies the variance conjecture if and only if any of its hyperplane projections does.

A convex body $K$ is called isotropic if it has volume 1, $|K|=1$, and for any
vector $\theta\in S^{n-1}$ we have $\E\langle X,\theta\rangle=0$ and $\E\langle
X,\theta\rangle^2=L_K^2$, where $X$ is a random vector uniformly distributed on $K$
and $L_K$ does not depend on $\theta$ and is called the isotropic constant of
$K$. Thus, $K$ is isotropic if and only if the random vector uniformly distributed on
$L_K^{-1}K$ is isotropic. Given a convex body $K$ and a hyperplane
$H=\theta^\perp$, with $\theta\in S^{n-1}$, the Steiner symmetrization of $K$
with respect to $H$ is the convex body defined as
$$
S_\theta(K)=\left\{x+t\theta\,:\,x\in P_{\theta^\perp}K, |t|\leq
\frac{1}{2}|K\cap \left(x+\langle\theta\rangle\right)|\right\},
$$
where $\langle\theta\rangle$ denotes the one-dimensional subspace spanned by
$\theta$. We
will also study the relation between the variance conjecture for a random vector uniformly distributed on an isotropic convex body and a random vector uniformly distributed on the Steiner symmetrization of it with respect to any hyperplane. We
will  show the following general result, which shows that a random vector uniformly distributed on an isotropic body verifies the variance conjecture if and only if
a random vector uniformly distributed on any of its Steiner symmetrizations does. As a consequence, if a random vector uniformly distributed on an isotropic convex body $K$ verifies the variance conjecture, then the class of random vectors uniformly distributed on any of its Steiner symmetrizations also verify the variance conjecture.

\begin{thm}\label{SteinerSymmetrization}
Let $K$ be an isotropic convex body and $\theta\in S^{n-1}$. Let us denote by
$X$ a random vector uniformly distributed on $K$ and by $Y_\theta$ a random vector uniformly distributed on $S_\theta(K)$, the Steiner symmetrization of $K$ with
respect to $H=\theta^\perp$. Then the following are equivalent
\begin{itemize}
\item There exists a constant $C_1$ such that
$$
\textrm{Var}\,|X|^2\leq C_1\lambda_X^2\E\,|X|^2.
$$
\item There exists a constant $C_2$ such that
$$
\textrm{Var}|Y_\theta|^2\leq
C_2\lambda_{Y_\theta}^2\E\,|Y_\theta|^2
$$
for some $\theta\in S^{n-1}$.
\item There exists a constant $C_3$ such that
$$
\textrm{Var}\,|Y_\theta|^2\leq C_3\lambda_{Y_\theta}^2\E\,|Y_\theta|^2
$$
for every $\theta\in S^{n-1}$,
\end{itemize}
where
$$
C_2\leq C_3\leq 2(C_1+ C)\textrm{ and }C_1\leq C_2+ C,
$$
with $C$ an absolute constant.
\end{thm}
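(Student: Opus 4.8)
The plan is to put both vectors on a common probability space and reduce everything to one ``error'' variable. For $x\in P_{\theta^\perp}K$ write $\ell(x)=|K\cap(x+\langle\theta\rangle)|$ for the length of the fiber of $K$ over $x$ and $m(x)$ for its midpoint, so that conditionally on $P_{\theta^\perp}X=x$ the variable $\langle X,\theta\rangle$ is uniform on the interval of length $\ell(x)$ centered at $m(x)$; in particular $m(P_{\theta^\perp}X)=\E[\langle X,\theta\rangle\mid P_{\theta^\perp}X]$. By Fubini the fiber of $S_\theta(K)$ over $x$ also has length $\ell(x)$, and since $|S_\theta(K)|=|K|=1$ the marginal of $Y_\theta$ on $\theta^\perp$ equals that of $X$. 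Hence one may couple $X$ and $Y_\theta$ so that $W:=P_{\theta^\perp}Y_\theta=P_{\theta^\perp}X$, the variable $Z:=\langle Y_\theta,\theta\rangle$ is uniform on $[-\ell(W)/2,\ell(W)/2]$ conditionally on $W$, and $\langle X,\theta\rangle=Z+m(W)$. Then $|X|^2=|W|^2+(Z+m(W))^2$, $|Y_\theta|^2=|W|^2+Z^2$, and $R:=|X|^2-|Y_\theta|^2=\langle X,\theta\rangle^2-Z^2=2Zm(W)+m(W)^2$.

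Next I would record the covariance bookkeeping. Since $S_\theta(K)$ is symmetric about $\theta^\perp$ and $\E[Z\mid W]=0$, the covariance matrix of $Y_\theta$ is block diagonal with $\theta^\perp$-block equal to that of $X$, namely $L_K^2 I_{\theta^\perp}$ (as $K$ is isotropic), and $\theta\theta$-entry $\E Z^2$. The parallel axis theorem on each fiber gives $\E Z^2=\tfrac1{12}\int\ell^3\leq\E\langle X,\theta\rangle^2=L_K^2$, hence $\lambda_{Y_\theta}^2=L_K^2=\lambda_X^2$ and $(n-1)L_K^2\leq\E|Y_\theta|^2=(n-1)L_K^2+\E Z^2\leq nL_K^2=\E|X|^2$. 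Thus, for $n\geq2$ (the case $n=1$ being trivial), $\lambda_X^2\E|X|^2$ and $\lambda_{Y_\theta}^2\E|Y_\theta|^2$ agree up to a factor $2$, and $L_K^4\leq\lambda_X^2\E|X|^2$; so it suffices to compare $\textrm{Var}\,|X|^2$ and $\textrm{Var}\,|Y_\theta|^2$ up to an additive error of order $L_K^4$ (equivalently, of order $\lambda_X^2\E|X|^2$ divided by $n$).

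The core step is the identity $\textrm{Var}\,|X|^2=\textrm{Var}\,|Y_\theta|^2+2\,\textrm{Cov}(R,|Y_\theta|^2)+\textrm{Var}\,R$. Here $\textrm{Var}\,R\leq2\,\E\langle X,\theta\rangle^4+2\,\E Z^4\leq CL_K^4$, because $\langle X,\theta\rangle$ and $Z$ are one-dimensional marginals of log-concave vectors with second moment at most $L_K^2$, so Borell's lemma bounds their fourth moments by $CL_K^4$. For the cross term, using $R=2Zm(W)+m(W)^2$ and $|Y_\theta|^2=|W|^2+Z^2$, the two summands of $R$ that carry a single factor of $Z$ have zero covariance with $|Y_\theta|^2$ (since $\E[Z\mid W]=\E[Z^3\mid W]=0$), leaving $\textrm{Cov}(R,|Y_\theta|^2)=\textrm{Cov}(m(W)^2,|W|^2)+\textrm{Cov}(m(W)^2,Z^2)$. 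I would bound these by Cauchy--Schwarz together with $\E m(W)^4\leq\E\langle X,\theta\rangle^4\leq CL_K^4$ (conditional Jensen), $\E|W|^4\leq\E|X|^4\leq C(\E|X|^2)^2=Cn^2L_K^4$ and $\E Z^4\leq CL_K^4$ (Borell), obtaining $|\textrm{Cov}(R,|Y_\theta|^2)|\leq CnL_K^4=C\lambda_X^2\E|X|^2$. Plugging this into the identity yields both $\textrm{Var}\,|X|^2\leq\textrm{Var}\,|Y_\theta|^2+C\lambda_X^2\E|X|^2$ and $\textrm{Var}\,|Y_\theta|^2\leq\textrm{Var}\,|X|^2+C\lambda_X^2\E|X|^2$; combined with the comparisons of $\lambda^2\E|\cdot|^2$ above this gives $C_1\leq C_2+C$ and $C_3\leq2(C_1+C)$, while $C_2\leq C_3$ is immediate, so the three statements are equivalent.

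The delicate point, and the reason one cannot simply invoke subadditivity of the standard deviation, is the cross term: bounding $\textrm{Cov}(R,|Y_\theta|^2)$ by $\sqrt{\textrm{Var}\,R}\,\sqrt{\textrm{Var}\,|Y_\theta|^2}$ would only give $\textrm{Var}\,|X|^2\lesssim\textrm{Var}\,|Y_\theta|^2$ up to a constant factor, not the additive $C_1\leq C_2+C$. The gain comes from first cancelling the odd-in-$Z$ covariances, using the centred-fiber structure of the Steiner symmetral, which leaves only covariances of $m(W)^2$, whose fourth moment is of the small order $L_K^4$ by conditional Jensen. Verifying that cancellation, the block structure of the covariance of $Y_\theta$, and the Borell-type fourth-moment estimates carefully is the main technical content of the proof.
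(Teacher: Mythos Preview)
Your argument is correct and reaches the same key estimate as the paper, namely $\bigl|\textrm{Var}\,|X|^2-\textrm{Var}\,|Y_\theta|^2\bigr|\leq CnL_K^4$, from which the stated relations among $C_1,C_2,C_3$ follow exactly as you indicate. The organizations differ, however. The paper fixes $\theta=e_n$, expands $\textrm{Var}\,|Y_\theta|^2$ as the double sum $\sum_i(\E Y_i^4-(\E Y_i^2)^2)+\sum_{i\neq j}(\E Y_i^2Y_j^2-\E Y_i^2\E Y_j^2)$, and compares each term with its counterpart for $X$: all terms not involving the $n$-th coordinate are \emph{equal} (same marginal on $\theta^\perp$), while those involving $e_n$ are handled by the elementary inequality $\int_{-\ell/2}^{\ell/2}t^{2k}\,dt\leq\int_a^b t^{2k}\,dt$ together with Cauchy--Schwarz and Borell on one-dimensional marginals. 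You instead build a coupling so that $|X|^2=|Y_\theta|^2+R$, use the bilinear identity for the variance, kill the odd-in-$Z$ covariances via the symmetry of the Steiner fibers, and then apply Cauchy--Schwarz plus Borell for the full norm $|X|$ to control $\textrm{Cov}(m(W)^2,|W|^2)$. Your route is more probabilistic and packages the ``off-$\theta$ terms agree'' observation into the single coupling identity $|X|^2-|Y_\theta|^2=2Zm(W)+m(W)^2$; the paper's route is slightly more elementary in that it only ever invokes Borell's inequality on linear functionals, never on $|X|$. Both produce the same additive error $CnL_K^4$ and hence the same constants.
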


The paper is organized as follows: We will prove Theorem \ref{projectionsB_p^n} in Section \ref{proofProjections}. In Section \ref{Preliminaries} we will
present some known results that we will use and in Section \ref{ProbabilisticEstimates} we will prove some technical lemmas we will need to prove Theorem
\ref{projectionsB_p^n}. Finally, in Section \ref{proofSteiner} we will show the
general results concerning the variance conjecture for projections of isotropic log-concave random vectors and  for random vectors uniformly distributed on the Steiner symmetrizations of an isotropic convex body. We will always use the letters $c, C, C^\prime$ to
denote absolute constants and will use $a\sim b$ to denote the existence of two
positive absolute constants $c, C$ such that $ca\leq b\leq Ca$.

\section{Preliminaries}\label{Preliminaries}
In this Section we present the tools we use to prove the aforementioned results.
We will use the techniques developed in \cite{BaN}. We will denote by
$\sigma_p^n$ the uniform area measure (Hausdorff measure) on $\partial B_p^n$, the boundary of
$B_p^n$, and by $\mu_p^n$
the cone probability measure on $\partial B_p^n$, defined by
$$
\mu^n_p(A)=\frac {|\{ta\in\R^n;a\in A, 0\leq t\leq 1   \}| }
{|B^n_p|}  \qquad A\subseteq \partial B^n_p.
$$
A relation between these two measures was proved in \cite{NR}. For the sake of completeness we include a short proof of it in the following lemma:
\begin{lemma}
Let $\sigma_p^n$ and $\mu_p^n$ be the uniform area measure and the cone probability measure on $\partial B_p^n$. Then
$$\frac{d\sigma^n_p(x)}{d\mu_p^n(x)}=n|B^n_p|\left\vert
\nabla(\Vert\cdot\Vert_p)(x)\right\vert
$$
for almost every point $x\in \partial B^n_p$.
\end{lemma}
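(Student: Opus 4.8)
The plan is to describe Lebesgue measure on $\R^n$ in polar coordinates adapted to $B_p^n$ in two ways---once through the cone measure $\mu_p^n$, once through the surface measure $\sigma_p^n$ via the coarea formula---and match them, using only that $\|\cdot\|_p$ is $1$-homogeneous, so that Euler's relation $\langle x,\nabla(\|\cdot\|_p)(x)\rangle=\|x\|_p$ holds wherever the gradient exists.

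First I would record the polar description of $\mu_p^n$. Every $x\in\R^n\setminus\{0\}$ is uniquely $x=r\omega$ with $r=\|x\|_p$ and $\omega=x/\|x\|_p\in\partial B_p^n$, and the definition of $\mu_p^n$ says exactly that the cone $\{r\omega:\omega\in A,\ 0\le r\le s\}$ has Lebesgue measure $s^n|B_p^n|\mu_p^n(A)$ for every Borel $A\subseteq\partial B_p^n$ and every $s>0$ (the case $s=1$ is the definition; general $s$ follows by dilation invariance of Lebesgue measure). Since such cones generate the relevant product $\sigma$-algebra and $\int_0^s nr^{n-1}\,dr=s^n$, this is equivalent to the identity
$$\int_{\R^n}h(x)\,dx=n|B_p^n|\int_{\partial B_p^n}\int_0^\infty h(r\omega)\,r^{n-1}\,dr\,d\mu_p^n(\omega)$$
for all nonnegative measurable $h$.

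Next I would compute the same integral through $\sigma_p^n$. Apply the coarea formula to the Lipschitz function $f=\|\cdot\|_p$: for nonnegative $g$, $\int_{\R^n}g|\nabla f|\,dx=\int_0^\infty\big(\int_{f^{-1}(t)}g\,d\mathcal H^{n-1}\big)\,dt$. Since $f$ is $1$-homogeneous, $\nabla f$ is $0$-homogeneous, so $|\nabla f(x)|=|\nabla f(x/\|x\|_p)|$ a.e.; moreover Euler's relation forces $\nabla f(x)\ne 0$ wherever it exists, so $g=h/|\nabla f|$ is admissible. The level set $f^{-1}(t)$ equals $t\,\partial B_p^n$, on which $\mathcal H^{n-1}$ scales as $t^{n-1}\sigma_p^n$, so the substitution $x=t\omega$ turns the coarea identity into
$$\int_{\R^n}h(x)\,dx=\int_0^\infty r^{n-1}\int_{\partial B_p^n}\frac{h(r\omega)}{|\nabla(\|\cdot\|_p)(\omega)|}\,d\sigma_p^n(\omega)\,dr .$$
Comparing this with the previous display (both hold for every nonnegative $h$, and it already suffices to test on $h$ supported on one cone $\{r\omega:\omega\in A,\ 0\le r\le 1\}$) yields $n|B_p^n|\,d\mu_p^n(\omega)=d\sigma_p^n(\omega)/|\nabla(\|\cdot\|_p)(\omega)|$, that is $d\sigma_p^n/d\mu_p^n=n|B_p^n|\,|\nabla(\|\cdot\|_p)(\omega)|$, as claimed.

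I do not expect a serious obstacle here; the only point that needs care is that $\|\cdot\|_p$ is differentiable $\sigma_p^n$-almost everywhere on $\partial B_p^n$ (differentiability fails on $\partial B_p^n$ intersected with the coordinate hyperplanes when $p=1$, and on the analogous set when $p=\infty$, but that set is $\sigma_p^n$- and $\mu_p^n$-null) and that on its complement $|\nabla(\|\cdot\|_p)|$ is strictly positive by Euler's relation, so the reciprocal makes sense. A variant avoiding the coarea formula is to fix a local $C^1$ parametrization $\omega(u)$ of $\partial B_p^n$, apply the change of variables $(r,u)\mapsto r\omega(u)$, and identify its Jacobian $r^{n-1}|\det[\omega,\partial_{u_1}\omega,\dots,\partial_{u_{n-1}}\omega]|=r^{n-1}|\langle\omega,\nu\rangle|\sqrt{\det g}$ (where $g$ is the first fundamental form and $\nu$ the outer unit normal) with $r^{n-1}\sqrt{\det g}/|\nabla(\|\cdot\|_p)(\omega)|$, using once more $\langle\omega,\nu\rangle=1/|\nabla(\|\cdot\|_p)(\omega)|$ from Euler's relation; either route should take only a few lines.
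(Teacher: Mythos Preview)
Your proposal is correct and follows essentially the same route as the paper: both express an integral against Lebesgue measure via the cone-measure polar decomposition on one side and via the coarea formula for $\|\cdot\|_p$ (together with the scaling of $\mathcal H^{n-1}$ on $t\,\partial B_p^n$ and the $0$-homogeneity of $|\nabla\|\cdot\|_p|$) on the other, then compare. The only cosmetic difference is that the paper integrates over $B_p^n$ with $t\in[0,1]$ while you integrate over all of $\R^n$ with $r\in(0,\infty)$; your added remarks on Euler's relation and the a.e.\ differentiability of $\|\cdot\|_p$ are correct and make explicit points the paper leaves implicit.
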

\begin{proof}
Let $g:\partial B^n_p\to \R$ be an integrable function with respect to $\mu_p^n$. Denoting by $\sigma_{t\partial B^n_p}$ the uniform area measure on $t\partial B^n_p$ and using the co-area formula, we have that
\begin{align*}
 \int_{\partial B_p^n} g(y)d\mu_p^n(y)&=\frac{1}{|B^n_p|}\int_{B^n_p}g\left(\frac{x}{\Vert x\Vert_p}\right)dx\\
&=\frac{1}{|B^n_p|}\int_0^1
\int_{t\partial B^n_p}\dfrac{g\left(\frac{x}{\Vert x\Vert_p}\right)}
{\vert \nabla(\Vert \cdot\Vert_p)(x)\vert}d\sigma_{t\partial B^n_p}(x)dt\\
&=\frac{1}{|B^n_p|}\int_0^1 t^{n-1}
\int_{\partial B^n_p}\dfrac{g(y)}{\vert \nabla(\Vert \cdot\Vert_p )(y)\vert}d\sigma_p^n(y)dt\\
&=\int_{\partial B^n_p}\frac{1}{n|B^n_p|}
\dfrac{g(y)}{\vert \nabla(\Vert \cdot\Vert_p)(y)\vert}d\sigma_p^n(y).\\
\end{align*}
\end{proof}
Consequently, by using Cauchy's
formula,
if $H=\theta^\perp$, $X$ is a random vector uniformly distributed on $K=P_HB_p^n$ and $f:K\to\R$ is a
Borel
integrable function
\begin{eqnarray*}
\E\, f(X)&=&\frac{1}{|K|}\int_Kf(x)dx\\&=&
\frac{1}{2|K|}\int_{\partial B^n_p}f(P_H(y))\frac{\left|\langle
\nabla \Vert \cdot\Vert_p(y),\theta\rangle\right|}{\left|
\nabla\Vert \cdot\Vert_p(y)\right|}d\sigma^n_p(y)\cr
&=&\frac{\int_{\partial B_p^n} f(P_H(y))|\langle
\nabla(\Vert\cdot\Vert_p)(y),\theta\rangle|d\mu_p^n}{\int_{\partial B_p^n}
|\langle \nabla(\Vert\cdot\Vert_p)(y),\theta\rangle|d\mu_p^n}\cr
&=&\frac{\int_{\partial B^n_p} f(P_H(y))\left|\sum_{i=1}^n
|y_i|^{p-1}\signum (y_i) \theta_i\right|d\mu^n_p(y)}{\int_{\partial B^n_p}
\left|\sum_{i=1}^n
|y_i|^{p-1}\signum (y_i) \theta_i\right|d\mu^n_p(y)}.
\end{eqnarray*}

We will use the following probabilistic description of the measure $\mu_p^n$
(see, for instance, \cite{SZ1}, \cite{BaN},
\cite{NR}): Let $g_1,\dots,g_n$ be independent copies of a random
variable $g$ with density with respect to the Lebesgue measure
$$\frac{e^{-|t|^p}}{2\Gamma(1+1/p)}$$ for every $t\in\R$ and denote by
$$S=\left(\sum_{i=1}^n|g_i|^p\right)^\frac{1}{p}.$$ Then
\begin{itemize}
\item The random vector
$\dfrac GS:=\left(\dfrac{g_1}{S},\dots,\dfrac{g_n}{S}\right)$  and the random
variable $S$ are independent.
\item $\dfrac GS$ is distributed on $\partial B_p^n$ according to the cone
measure $\mu_p^n$.
\end{itemize}
Hence
$$
\E\,f(X)=\frac{\E
f\left(P_H\left(\frac{g_1}{S},\dots,\frac{g_n}{S}\right)\right)\left|\sum_{i=1}
^n
\frac{|g_i|^{p-1}}{S^{p-1}}\signum (g_i) \theta_i\right|}{\E
\left|\sum_{i=1}^n
\frac{|g_i|^{p-1}}{S^{p-1}}\signum (g_i) \theta_i\right|}.
$$

By the independence of $\frac{G}{S}$ and $S$, we have
\begin{eqnarray*}\E\,f(X)
&=&\frac{\E
f\left(P_H\left(\frac{g_1}{S},\dots,\frac{g_n}{S}\right)\right)\left|\sum_{i=1}
^n
|g_i|^{p-1}\signum (g_i) \theta_i\right|}{\E
\left|\sum_{i=1}^n
|g_i|^{p-1}\signum (g_i) \theta_i\right|}\\
&=&\frac{\E
f\left(P_H\left(\frac{G}{S}\right)\right)\psi_\theta}{\E
\psi_\theta},
\end{eqnarray*}where $\psi_\theta$ is defined as
\begin{equation}\label{Psi}
 \psi_\theta=\left|\sum_{i=1}^n
|g_i|^{p-1}\signum (g_i) \theta_i\right|.
\end{equation} We will sometimes use the notation $\psi$ instead of $\psi_\theta$ when there is no possibility of confusion.

The following theorem, which will be used to obtain some estimates for the expected value of $\psi$,  was proved in \cite{ACPP}:
\begin{thm} \label{thm:orlicz_p_norm}
Let $1<q<\infty$, $X_1,\ldots,X_n$ be independent identically distributed integrable random variables. For every $s\geq0$ define
\[
M(s) = \frac{q}{q-1}\int_0^s\left( \int_{|X_1| \leq \frac{1}{t}} t^{q-1} \left| X_1 \right|^q d \Pro + \int_{ |X_1| > 1/t}|X_1|d\mathbb P \right)dt .
\]
Then, for every $x\in\R^n$,
\[
 c_1 (q-1)^{1/q} \Vert x \Vert_M \leq \mathbb E \left(\sum_{i=1}^n|x_iX_i|^q\right)^{\frac{1}{q}} \leq c_2 \Vert x \Vert_M,
\]
where $c_1,c_2,$ are positive absolute constants and $\Vert x\Vert_M$ denotes the Luxemburg norm given by the Orlicz function $M$, which is defined by
$$
\Vert x \Vert_M=\inf\left\{\rho>0\,:\,\sum_{i=1}^nM\left(\frac{|x_i|}{\rho}\right)\leq1\right\}.
$$
\end{thm}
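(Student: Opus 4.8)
\emph{Proof proposal.} The quantity to be estimated is the expected $\ell_q^n$-norm of the random vector $(x_1X_1,\dots,x_nX_n)$, and every quantity in the statement is positively homogeneous in $x$, so I would reduce at once to the normalised case: fix $x$ with $\Vert x\Vert_M=1$, i.e.\ $\sum_{i=1}^nM(|x_i|)=1$, and aim to prove $\E\bigl(\sum_i|x_iX_i|^q\bigr)^{1/q}\sim(q-1)^{1/q}$. The preliminary step is to put $M$ in a usable form: interchanging the order of integration in the double integral defining $M$ (doing the $t$-integral first, for a fixed value of $X_1$) gives
\[
M(s)=\frac{1}{q-1}\,\E\bigl(\min\{s|X_1|,1\}\bigr)^q+\frac{q}{q-1}\,\E\bigl(s|X_1|-1\bigr)_+,
\]
from which I only need three consequences: $\E(\min\{s|X_1|,1\})^q\le(q-1)M(s)$, $\Pro(s|X_1|>1)\le(q-1)M(s)$, and $\E(s|X_1|-1)_+\le\tfrac{q-1}{q}M(s)$. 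Combined with the elementary fact that $(q-1)^{1/q}$ and $q^{1/q}$ are bounded above by absolute constants for all $q>1$, this is essentially the whole toolkit.

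For the upper bound I would truncate each coordinate at level $1$: with $\xi_i=x_iX_i$, and using $|\xi_i|\mathbf 1_{\{|\xi_i|>1\}}\le\mathbf 1_{\{|\xi_i|>1\}}+(|\xi_i|-1)_+$ coordinatewise, the triangle inequality in $\ell_q^n$ gives
\[
\Vert(\xi_i)\Vert_q\le\Bigl(\sum_i|\xi_i|^q\mathbf 1_{\{|\xi_i|\le1\}}\Bigr)^{1/q}+\bigl(\#\{i:|\xi_i|>1\}\bigr)^{1/q}+\sum_i(|\xi_i|-1)_+ .
\]
Taking expectations, concavity of $t\mapsto t^{1/q}$ bounds the first term by $\bigl(\sum_i\E(\min\{|x_i||X_1|,1\})^q\bigr)^{1/q}\le(q-1)^{1/q}$ and, likewise, the second by $\bigl(\sum_i\Pro(|x_i||X_1|>1)\bigr)^{1/q}\le(q-1)^{1/q}$ — the point being that the (possibly order-$q$) expected number of exceptional coordinates is only raised to the power $1/q$, whereas a naive $\ell_1$ bound there would cost a factor $q$. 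The third term has expectation at most $\tfrac{q-1}{q}\sum_iM(|x_i|)\le1$. Summing, $\E\Vert(x_iX_i)\Vert_q\le 2(q-1)^{1/q}+1\le c_2\Vert x\Vert_M$.

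For the lower bound I would use the two cheap estimates $\Vert(x_iX_i)\Vert_q\ge\bigl(\sum_i\min\{|x_iX_i|,1\}^q\bigr)^{1/q}$ and $\Vert(x_iX_i)\Vert_q\ge\max_i|x_iX_i|$, arguing by cases according to which of the two summands of $\sum_iM(|x_i|)=1$ carries at least half of the total mass. If the truncated (``bulk'') part does, then $W:=\sum_i\min\{|x_iX_i|,1\}^q$ is a sum of independent $[0,1]$-valued variables with $\E W\ge(q-1)/2$ and, since each summand is bounded by its own first power, $\E W^2\le\E W+(\E W)^2$; a Paley--Zygmund inequality when $\E W$ is of order $1$ or larger, and the elementary pointwise bound $\min\{W,1\}\ge W-\tfrac12W^2$ when $\E W$ is small, yield in both cases $\E W^{1/q}\gtrsim(q-1)^{1/q}$. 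If instead the ``tail'' part carries half the mass, I would invoke the standard lower bound $\E\max_i|x_iX_i|\gtrsim t_0+\sum_i\E(|x_i||X_1|-t_0)_+$ for the expected maximum of independent nonnegative random variables, where $t_0=\inf\{t:\sum_i\Pro(|x_i||X_1|>t)\le1\}$: if $t_0>1$ then $\E\max_i|x_iX_i|>1\gtrsim(q-1)^{1/q}$, and if $t_0\le1$ then $\sum_i\E(|x_i||X_1|-t_0)_+\ge\sum_i\E(|x_i||X_1|-1)_+\gtrsim(q-1)/q\gtrsim(q-1)^{1/q}$. In all cases $\E\Vert(x_iX_i)\Vert_q\gtrsim(q-1)^{1/q}\Vert x\Vert_M$; the factor $(q-1)^{1/q}$ is genuinely present, as the example $X_1\equiv1$ shows, where $\Vert x\Vert_M=\Vert x\Vert_q\,(q-1)^{-1/q}$.

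I expect the lower bound to be the main obstacle: the Paley--Zygmund step degrades precisely when $\E W$ is small, i.e.\ when $q$ is close to $1$, so one must make sure that there the maximum term (or a sharper small-ball estimate for $W$) takes over, and the case analysis has to be carried out with enough care to verify by hand that all the powers of $(q-1)$ and $q$ that appear along the way are comparable to absolute constants uniformly in $q\in(1,\infty)$. If one wishes to bypass this bookkeeping, an alternative is to derive the estimate from a general Orlicz-norm description of $\E\Vert(a_iX_i)\Vert$ for i.i.d.\ sequences available in the literature on minima and Orlicz norms of random sequences.
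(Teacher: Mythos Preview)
The paper does not prove this theorem at all: it is quoted verbatim from \cite{ACPP} and used as a black box in Section~\ref{ProbabilisticEstimates}. So there is no in-paper proof to compare against; what you have written is a self-contained direct argument that would replace the citation.

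Your proof is essentially correct. The Fubini rewriting of $M$ is right, and the three consequences you extract are exactly what is needed. The upper bound is clean; just note that your final quantity $2(q-1)^{1/q}+1$ is \emph{not} bounded by a constant times $(q-1)^{1/q}$ as $q\to 1^+$, but that is fine, since the claimed upper inequality is $\E\Vert(x_iX_i)\Vert_q\le c_2\Vert x\Vert_M$ with no $(q-1)^{1/q}$ factor, and $2(q-1)^{1/q}+1\le 2e^{1/e}+1$ is an absolute constant. For the lower bound, both branches go through: the ``tail'' branch uses the standard estimate $\E\max_iY_i\ge(1-1/e)\bigl(t_0+\sum_i\E(Y_i-t_0)_+\bigr)$, which follows from $1-\prod_i(1-p_i)\ge 1-e^{-\sum p_i}$ integrated over $t$; and in the ``bulk'' branch the Paley--Zygmund argument needs independence of the summands of $W$ (to get $\textrm{Var}\,W\le\E W$ from $W_i\in[0,1]$), which you use implicitly but should state. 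The bookkeeping you flag --- checking that $(q-1)/q$, $(q-1)$, and $(q-1)^{1/q}$ are all comparable in the right regimes --- is tedious but routine, and your honesty about it is appropriate.

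Compared to the route taken in \cite{ACPP}, which develops the result within a general Musielak--Orlicz framework, your approach is more elementary and tailored to the i.i.d.\ case at hand; it also makes transparent why the asymmetric constants $c_1(q-1)^{1/q}$ and $c_2$ arise, via the truncation at level~$1$.
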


We will also make use of the following theorem, which was proved in \cite{KS}:

\begin{thm}\label{AveragePermutations}
Let $1\leq q\leq\infty$ and $a\in\R^{n\times n}$. Then
$$
\textrm{Ave}_\pi\left(\sum_{i=1}^n|a_{i,\pi(i)}|^q\right)^\frac{1}{q}\sim \frac{1}{n}\sum_{k=1}^n (a_{i,j}^*)_k+\left(\frac{1}{n}\sum_{k=n+1}^{n^2}(a_{i,j}^*)_k^q\right)^\frac{1}{q},
$$
where $a_{i,j}^*\in\R^{n^2}$ is the decreasing rearrangement of $a$ and $\pi$ runs over all the permutations of $\{1,\dots, n\}$.
\end{thm}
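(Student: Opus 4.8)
The plan is to prove the two-sided estimate directly; replacing each $a_{i,j}$ by $|a_{i,j}|$ we may and do assume all entries are nonnegative. Write $\alpha=\frac1n\sum_{k=1}^n a^*_k$ and $\beta=\bigl(\frac1n\sum_{k=n+1}^{n^2}(a^*_k)^q\bigr)^{1/q}$, so that the claim reads $\textrm{Ave}_\pi\bigl(\sum_{i=1}^n a_{i,\pi(i)}^q\bigr)^{1/q}\sim\alpha+\beta$. Fix a set $A$ of $n$ cells carrying the $n$ largest entries of the matrix, so that the entries in $A$ form the multiset $\{a^*_1,\dots,a^*_n\}$ and every entry off $A$ is at most $a^*_{n+1}$, and let $\tilde a$ be the matrix agreeing with $a$ off $A$ and vanishing on $A$. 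For a uniformly random permutation $\pi$ one has $\Pro[\pi(i)=j]=\frac1n$ for every cell $(i,j)$ and $\Pro[\pi(i)=j,\,\pi(i')=j']=\frac1{n(n-1)}$ whenever $i\neq i'$ and $j\neq j'$; these two identities drive everything.

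For the upper bound, split the diagonal of $\pi$ according to $A$, use $\|\cdot\|_q\le\|\cdot\|_1$ on the part inside $A$ and the subadditivity $(x+y)^{1/q}\le x^{1/q}+y^{1/q}$:
\[
\Bigl(\sum_{i=1}^n a_{i,\pi(i)}^q\Bigr)^{1/q}\le\sum_{i:(i,\pi(i))\in A}a_{i,\pi(i)}+\Bigl(\sum_{i:(i,\pi(i))\notin A}a_{i,\pi(i)}^q\Bigr)^{1/q}.
\]
Taking $\textrm{Ave}_\pi$, the first term contributes $\frac1n\sum_{(i,j)\in A}a_{i,j}=\alpha$ and, by Jensen's inequality for the concave function $t\mapsto t^{1/q}$, the second contributes at most $\bigl(\frac1n\sum_{(i,j)\notin A}a_{i,j}^q\bigr)^{1/q}=\beta$. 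Hence $\textrm{Ave}_\pi\bigl(\sum_i a_{i,\pi(i)}^q\bigr)^{1/q}\le\alpha+\beta$.

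For the lower bound it suffices, since $\alpha+\beta\le 2\max(\alpha,\beta)$, to bound the average below by $\gtrsim\alpha$ and, separately, by $\gtrsim\beta$ in the range $\beta\ge\alpha$. The bound by $\alpha$ uses $\bigl(\sum_i a_{i,\pi(i)}^q\bigr)^{1/q}\ge\max_i a_{i,\pi(i)}$ together with inclusion–exclusion: ordering the cells by size and letting $E_k$ be the event that $\pi$ meets the $k$-th largest cell but none of the $k-1$ larger ones, the $E_k$ are disjoint and $\Pro[E_k]\ge\frac1n-\frac{k-1}{n(n-1)}\ge\frac1{2n}$ for $k\le n/4$, whence $\E_\pi\max_i a_{i,\pi(i)}\ge\frac1{2n}\sum_{k\le n/4}a^*_k\gtrsim\alpha$ (the top quarter of a decreasing sequence carries at least a quarter of its mass; the finitely many small $n$ are checked by hand). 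For the bound by $\beta$ when $\beta\ge\alpha$, note $a^*_{n+1}\le a^*_n\le\alpha\le\beta$, so every entry of $\tilde a$ is at most $\beta$. Put $T=\sum_{i=1}^n\tilde a_{i,\pi(i)}^q$. The first identity above gives $\E_\pi T=\frac1n\sum_{i,j}\tilde a_{i,j}^q=\beta^q$, and the second, with $\tilde a_{i,j}^{2q}\le(a^*_{n+1})^q\tilde a_{i,j}^q$, gives
\[
\E_\pi T^2\le\frac1n\sum_{i,j}\tilde a_{i,j}^{2q}+\frac1{n(n-1)}\Bigl(\sum_{i,j}\tilde a_{i,j}^q\Bigr)^2\le(a^*_{n+1})^q\beta^q+2\beta^{2q}\le 3\beta^{2q}.
\]
By the Paley–Zygmund inequality $\Pro[T\ge\tfrac12\E_\pi T]\ge\tfrac14(\E_\pi T)^2/\E_\pi T^2\ge\tfrac1{12}$, so $\E_\pi T^{1/q}\ge\tfrac1{12}(\tfrac12\beta^q)^{1/q}\gtrsim\beta$; since $\sum_i a_{i,\pi(i)}^q\ge T$ this yields $\textrm{Ave}_\pi\bigl(\sum_i a_{i,\pi(i)}^q\bigr)^{1/q}\gtrsim\beta$, and combining the two ranges finishes the lower bound.

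The main obstacle is precisely this $\beta$-bound: $\textrm{Ave}_\pi\bigl(\sum_i a_{i,\pi(i)}^q\bigr)^{1/q}\gtrsim\beta$ is \emph{false} in general — a matrix with $n+1$ entries equal to $1$ and the rest $0$ has all diagonal sums $O(1)$, so its permutation average is $\sim 1/n$, while $\beta\sim n^{-1/q}$ is much larger — so one must exploit that in such "few dominant entries" situations the $\alpha$-term already accounts for the right-hand side. The case split $\beta\le\alpha$ versus $\beta>\alpha$ is exactly what turns the trivial a priori inequality $a^*_{n+1}\le\alpha$ into the bound $a^*_{n+1}\le\beta$ needed to control the diagonal term $\frac1n\sum_{i,j}\tilde a_{i,j}^{2q}$ in the second-moment estimate; the rest is bookkeeping. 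The case $q=\infty$ is the same argument, now with $\beta=a^*_{n+1}\le\alpha$ always, so only the $\alpha$-bound is ever needed.
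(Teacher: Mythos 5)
The paper itself gives no proof of this statement: Theorem \ref{AveragePermutations} is simply quoted from Kwapie\'n--Sch\"utt \cite{KS} and used as a black box, so there is no internal argument to compare yours with. Your proof is a correct, self-contained derivation, and all the load-bearing steps check out: the upper bound by splitting the diagonal along the set $A$ of the $n$ largest cells, using $\ell_q\le\ell_1$ on $A$ and Jensen off $A$; the lower bound by $\alpha$ via the disjoint events $E_k$ with $\Pro[E_k]\ge\frac1n-\frac{k-1}{n(n-1)}\ge\frac1{2n}$ for $k\le n/4$ (and the trivial treatment of $n\le3$); and the lower bound by $\beta$ when $\beta\ge\alpha$ via the exact first moment of $T$, the pairwise probabilities $\frac1{n(n-1)}$, and Paley--Zygmund, where the case split is exactly what turns $a^*_{n+1}\le\alpha$ into $a^*_{n+1}\le\beta$. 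Compared with the combinatorial rearrangement argument in \cite{KS}, your route is more probabilistic (second-moment method) and yields the two-sided estimate with short, explicit absolute constants; it also handles $q=\infty$ uniformly, as you note.

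One side remark in your discussion is wrong, although nothing in the proof depends on it: the claim that $\textrm{Ave}_\pi\bigl(\sum_i a_{i,\pi(i)}^q\bigr)^{1/q}\gtrsim\beta$ ``is false in general,'' with the matrix having $n+1$ unit entries as a counterexample. For that matrix the number $H$ of marked cells hit by a random permutation satisfies $\E H=\frac{n+1}{n}$ and $\E H^2\le\frac{n+1}{n}+\frac{(n+1)^2}{n(n-1)}\le 3$ for $n\ge2$, so $\Pro[H\ge1]\ge\frac13$ and the permutation average is $\gtrsim1$, not $\sim1/n$; since $\beta=n^{-1/q}\le1$, the inequality holds there. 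In fact the $\beta$-bound holds unconditionally: if $a^*_{n+1}\le\beta$ your Paley--Zygmund computation applies verbatim, and if $a^*_{n+1}>\beta$ then $\beta<a^*_{n+1}\le a^*_n\le\alpha$ and the $\alpha$-bound already dominates. So the case split is a convenient organization of the proof rather than a necessity forced by a genuine obstruction; the proof itself stands as written.
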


In the same paper the authors showed that when $q=2$ this estimate can be estimated by using an Orlicz function.
%

\section{Some probabilistic estimates}\label{ProbabilisticEstimates}
In this section we will prove several technical lemmas we will need in order to prove Theorem \ref{projectionsB_p^n}. The following lemma is well known:

\begin{lemma}\label{LemmagS}
Let $\alpha\geq 0$ and let $g_1,\dots,g_n$ be independent copies of a random variable
$g$, with density with respect to the Lebesgue measure $\frac{e^{-|t|^p}}{2\Gamma(1+1/p)}$, and $S=\left(\sum_{i=1}^n|g_i|^p\right)^\frac{1}{p}$. Then
$$
\E|g|^\alpha=\frac{\Gamma\left(\frac{\alpha+1}{p}\right)}{\Gamma\left(\frac{1}{p
}\right)}
$$
and
$$
\E S^\alpha=\frac{\Gamma\left(
 \frac{n+\alpha}{p}\right)}{\Gamma\left(
 \frac{n}{p}\right)}
$$
\end{lemma}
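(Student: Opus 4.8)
The plan is to compute both expectations directly by integrating against the explicit densities, reducing everything to the integral representation of the Gamma function, $\Gamma(z) = \int_0^\infty u^{z-1}e^{-u}\,du$.

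First I would handle $\E|g|^\alpha$. Since $g$ has density $\frac{e^{-|t|^p}}{2\Gamma(1+1/p)}$ on $\R$, symmetry gives
\begin{equation*}
\E|g|^\alpha = \frac{1}{\Gamma(1+1/p)}\int_0^\infty t^\alpha e^{-t^p}\,dt.
\end{equation*}
I would then substitute $u = t^p$, so $t = u^{1/p}$ and $dt = \frac{1}{p}u^{1/p-1}\,du$, turning the integral into $\frac{1}{p}\int_0^\infty u^{\frac{\alpha+1}{p}-1}e^{-u}\,du = \frac{1}{p}\Gamma\!\left(\frac{\alpha+1}{p}\right)$. Combining with the identity $\Gamma(1+1/p) = \frac{1}{p}\Gamma(1/p)$ yields $\E|g|^\alpha = \Gamma\!\left(\frac{\alpha+1}{p}\right)/\Gamma\!\left(\frac{1}{p}\right)$, as claimed. (The case $\alpha=0$ is just the normalization of the density, and the finiteness of the integral for all $\alpha\ge 0$ is immediate from $e^{-t^p}$ decay.)

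For $\E S^\alpha$ with $S = \left(\sum_{i=1}^n |g_i|^p\right)^{1/p}$, I would write out the joint density of $(g_1,\dots,g_n)$ and pass to the quantity $r = \left(\sum |g_i|^p\right)^{1/p}$. The clean way is to recall (or quickly re-derive via the change of variables $t_i = |g_i|^p$, which is the standard computation behind the cone-measure description already recalled in Section~\ref{Preliminaries}) that the density of $S$ on $(0,\infty)$ is proportional to $r^{n-1}e^{-r^p}$; more precisely, normalizing,
\begin{equation*}
\E S^\alpha = \frac{\int_0^\infty r^{n-1+\alpha}e^{-r^p}\,dr}{\int_0^\infty r^{n-1}e^{-r^p}\,dr}.
\end{equation*}
Applying the same substitution $u = r^p$ to numerator and denominator converts each into a Gamma value: the numerator becomes $\frac{1}{p}\Gamma\!\left(\frac{n+\alpha}{p}\right)$ and the denominator $\frac{1}{p}\Gamma\!\left(\frac{n}{p}\right)$, and the ratio is exactly $\Gamma\!\left(\frac{n+\alpha}{p}\right)/\Gamma\!\left(\frac{n}{p}\right)$.

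There is no serious obstacle here; the only point requiring a little care is justifying the radial density of $S$. If I want to avoid invoking external facts, I would argue: by the change of variables $s_i = |g_i|^p$ (together with the $2^n$ choices of sign), $\E S^\alpha = \frac{1}{(2\Gamma(1+1/p))^n}\cdot 2^n \cdot \frac{1}{p^n}\int_{(0,\infty)^n}\left(\sum s_i\right)^{\alpha/p}\prod s_i^{1/p-1}e^{-\sum s_i}\,ds$, and the resulting integral over the positive orthant is a classical Dirichlet-type integral equal to $\frac{\Gamma(1/p)^n\,\Gamma((n+\alpha)/p)}{\Gamma(n/p)\,\Gamma((n+\alpha)/p)}\cdot\Gamma((n+\alpha)/p)$ — I would simply cite the Dirichlet integral formula and cancel constants against the $\alpha=0$ normalization, which forces the prefactor to be $1$ and leaves precisely $\Gamma((n+\alpha)/p)/\Gamma(n/p)$.
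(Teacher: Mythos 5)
Your proof is correct and takes essentially the same route as the paper: the paper also computes $\E|g|^\alpha$ directly and obtains $\E S^\alpha$ by writing it as $\int_{\R^n}\Vert x\Vert_p^\alpha e^{-\Vert x\Vert_p^p}\,dx$ (up to normalization) and passing to polar coordinates, which is exactly your observation that the density of $S$ is proportional to $r^{n-1}e^{-r^p}$ followed by normalizing against the case $\alpha=0$. (The explicit Dirichlet-integral constant displayed in your last paragraph is garbled as written, but since you cancel the prefactor using the $\alpha=0$ normalization anyway, this does not affect the validity of the argument.)
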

\begin{proof}
The value of $\E|g|^\alpha$ can be computed directly. Let us compute $\E S^\alpha$.
\[
  \E S^\alpha=\E\left(\sum_{i=1}^n|g_i|^p\right)^{\alpha/p}=\int_{\R^n}
 \Vert x\Vert_p^{\alpha}\frac{e^{-\Vert
x\Vert_p^p}}{\left(2\Gamma(1+1/p)\right)^n}dx.
\]
Changing to polar coordinates
\begin{align*}
 \E S^\alpha&=\frac{n|B^n_p| }{\left(2\Gamma(1+1/p)\right)^n}
 \int_0^\infty r^{n+\alpha-1}
e^{-r^p}dr\end{align*}
and this expression implies the result.
\end{proof}

This lemma implies the following:
\begin{lemma}\label{MomentsSumsSymmetric}
Let $X_1,\dots, X_n$ be independent copies of $X=g^2-\bar{g}^2$, where $\bar{g}$ is an independent copy of $g$, defined as before. Then, for any $2\leq\alpha\leq e^p$ we have
$$
\left(\E\left|\sum_{i=1}^n X_i\right|^\alpha\right)^\frac{1}{\alpha}\leq C\sqrt{\alpha n}.
$$
\end{lemma}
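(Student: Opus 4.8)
The plan is to use the symmetry of the $X_i$ to reduce the whole estimate to a bound on a \emph{single} moment $\E|X_1|^\alpha$, via Rademacher symmetrization, Khintchine's inequality and Minkowski's inequality, and then to control that single moment using Lemma~\ref{LemmagS} together with elementary estimates for the Gamma function. The hypothesis $\alpha\le e^p$ enters exactly at this last point: it is what keeps $\|X_1\|_\alpha$ bounded by an absolute constant.

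First I would observe that, since $\bar g$ is an independent copy of $g$, each $X_i=g_i^2-\bar g_i^2$ is symmetric and the $X_i$ are independent, so $\sum_{i=1}^nX_i$ has the same law as $\sum_{i=1}^n\varepsilon_iX_i$ for independent Rademacher signs $\varepsilon_i$ that are independent of the $X_i$. Conditioning on $(X_i)_i$ and applying Khintchine's inequality (equivalently, the subgaussian bound for Rademacher sums) in the form $\big(\E_\varepsilon|\sum_i\varepsilon_ia_i|^\alpha\big)^{1/\alpha}\le C\sqrt\alpha\,(\sum_ia_i^2)^{1/2}$, valid for $\alpha\ge2$, one gets
\[
\Big(\E\Big|\sum_{i=1}^nX_i\Big|^\alpha\Big)^{1/\alpha}\le C\sqrt\alpha\,\Big\|\sum_{i=1}^nX_i^2\Big\|_{\alpha/2}^{1/2}.
\]
Since $\alpha/2\ge1$ and the $X_i^2$ are nonnegative, Minkowski's inequality gives $\big\|\sum_{i=1}^nX_i^2\big\|_{\alpha/2}\le n\,\|X_1^2\|_{\alpha/2}=n\,\|X_1\|_\alpha^2$, so the whole statement is reduced to showing that $\|X_1\|_\alpha\le C$ for an absolute constant $C$ whenever $2\le\alpha\le e^p$.

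This last point is where the real work, and the main obstacle, lies. Bounding $|g^2-\bar g^2|\le g^2+\bar g^2$ and expanding, $\E|X_1|^\alpha\le 2^\alpha\,\E g^{2\alpha}=2^\alpha\,\Gamma\big(\tfrac{2\alpha+1}{p}\big)\big/\Gamma\big(\tfrac1p\big)$ by Lemma~\ref{LemmagS}. Here one must be careful to \emph{keep} the factor $1/\Gamma(1/p)$: dropping it is far too lossy, since $\Gamma(1/p)$ grows like $p$ for large $p$, which is precisely what compensates the growth of $\Gamma((2\alpha+1)/p)$. Using that $\Gamma$ is decreasing on $(0,1]$ with $\Gamma(1)=1$ gives $\Gamma(1/p)\ge1$ and, when $(2\alpha+1)/p\le1$, even $\Gamma((2\alpha+1)/p)\le\Gamma(1/p)$, so $\E g^{2\alpha}\le1$; and when $s:=(2\alpha+1)/p>1$, the Stirling-type bound $\log\Gamma(s)\le s\log s$ together with $\log s\le\log(2\alpha+1)\le\log3+\log\alpha\le\log3+p$ — this is exactly where $\alpha\le e^p$ is used — yields $\log\Gamma(s)\le s\log s\le\tfrac{2\alpha+1}{p}\cdot Cp\le C'\alpha$, i.e. $\E g^{2\alpha}\le e^{C'\alpha}$. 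In either case $\|X_1\|_\alpha\le C$, and combining with the previous paragraph, $\big(\E|\sum_iX_i|^\alpha\big)^{1/\alpha}\le C\sqrt\alpha\cdot\sqrt n\le C'\sqrt{\alpha n}$.

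One reason for taking this route rather than a direct moment inequality: a naive application of Rosenthal's inequality to the i.i.d.\ centered variables $X_i$ produces a term of order $\alpha\,n^{1/\alpha}\|X_1\|_\alpha$, with the wrong power of $\alpha$, which can exceed $\sqrt{\alpha n}$ when $\alpha\gg n$ — and this range genuinely occurs, since $\alpha$ runs up to $e^p$ with $p$ arbitrary. The symmetrization-plus-Khintchine argument instead produces the sharp factor $\sqrt\alpha\,\sqrt n\,\|X_1\|_\alpha$, which is what makes the bound valid throughout the stated range; the price is the crude replacement of $\|X_1\|_2$ by $\|X_1\|_\alpha$, which is harmless precisely because $\alpha\le e^p$ keeps the latter of order $1$.
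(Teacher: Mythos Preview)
Your argument is correct and follows essentially the same route as the paper's proof: symmetrize with Rademacher signs, apply Khintchine's inequality to pull out a factor $C\sqrt{\alpha}$, reduce the remaining $\ell^2$-sum to $n$ times a single moment (your Minkowski step is equivalent to the paper's power-mean inequality $(\sum X_i^2)^{\alpha/2}\le n^{\alpha/2-1}\sum|X_i|^\alpha$), and then bound $\|X_1\|_\alpha$ by an absolute constant via Lemma~\ref{LemmagS} and a Gamma/Stirling estimate using $\alpha\le e^p$. The only cosmetic difference is that the paper states the Stirling step more compactly as $(\E|g|^{2\alpha})^{1/\alpha}\le C\alpha^{2/p}\le C$, whereas you split into the cases $(2\alpha+1)/p\le 1$ and $>1$; both arrive at the same conclusion.
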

\begin{proof}
By the triangle inequality
$$
\left(\E|X|^\alpha\right)^\frac{1}{\alpha}\leq2\left(\E|g|^{2\alpha}\right)^\frac{1}{\alpha}=2\left(\frac{\Gamma\left(\frac{1+2\alpha}{p}\right)}{\Gamma\left(\frac{1}{p}\right)}\right)^\frac{1}{\alpha}.
$$
Using Stirling's formula
$$
\left(\E|X|^\alpha\right)^\frac{1}{\alpha}\leq C \alpha^\frac{2}{p}\leq C_1,
$$
since $\alpha\leq e^p$. Now, since the random variables $X_i$ are symmetric, taking $\varepsilon_1,\dots,\varepsilon_n$ independent Bernoulli random variables, which are also independent of the random variables $X_i$, we have
$$
\E\left|\sum_{i=1}^n X_i\right|^\alpha=\E\E_{\varepsilon}\left|\sum_{i=1}^n \varepsilon_iX_i\right|^\alpha
$$
and, by Khintchine's inequality (see \cite{HA} for the best value of the constant in Khintchine's inequality)
$$
\E_{\varepsilon}\left|\sum_{i=1}^n \varepsilon_iX_i\right|^\alpha\leq (C_2\sqrt{\alpha})^\alpha\left(\sum_{i=1}^n|X_i|^2\right)^\frac{\alpha}{2}\leq (C_2\sqrt{\alpha})^\alpha n^{\frac{\alpha}{2}-1}\sum_{i=1}^n|X_i|^\alpha.
$$
Hence
$$
\left(\E\left|\sum_{i=1}^n X_i\right|^\alpha\right)^\frac{1}{\alpha}\leq C_1C_2\sqrt{\alpha n}.
$$
\end{proof}
Let us recall that for every $\theta\in S^{n-1}$, $\psi_\theta$ was defined like
$$
\psi_\theta=\left|\sum_{i=1}^n|g_i|^{p-1}\signum(g_i)\theta_i\right|.
$$
We will also call
$$
\phi_\theta=\left(\sum_{i=1}^n|g_i|^{2p-2}\theta_i^2\right)^\frac{1}{2}.
$$
Notice that since the random variables $g_i$ are symmetric with respect to the origin, for
any
choice of signs $\varepsilon_i=\pm 1$ we have
$$
\E\psi_\theta=\E\left|\sum_{i=1}^n|g_i|^{p-1}\signum(g_i)\theta_i\right|=\E\left|\sum_{
i=1}
^n|\varepsilon_ig_i|^{p-1}\signum(\varepsilon_ig_i)\theta_i\right|.
$$
Thus, taking $\varepsilon_1,\dots,\varepsilon_n$ independent Bernoulli random
variables, by Khintchine's inequality we have
\begin{eqnarray*}
\E\psi_\theta&=&\E_\varepsilon\E_g\left|\sum_{i=1}
^n|\varepsilon_ig_i|^{p-1}\signum(\varepsilon_ig_i)\theta_i\right|=\E_g\E_\varepsilon\left|\sum_{i=1}^n\varepsilon_i
|g_i|^{p-1}\signum(g_i)\theta_i\right|\cr
&\sim&\E\left(\sum_{i=1}^n|g_i|^{2p-2}\theta_i^2\right)^\frac{1}{2}=\E\phi_\theta.
\end{eqnarray*}
The following lemma gives estimates for the value of $\E\psi_\theta$, independent of the direction $\theta$, in terms of the $\Vert\theta\Vert_1$, or in terms of the value of $\E\psi_{\theta_0}$, where $\theta_0$ is the diagonal direction.
\begin{lemma}\label{ExpectationPsi}
Let $\theta_0=\left(\frac{1}{\sqrt n},\dots,\frac{1}{\sqrt n}\right)$. Then
\begin{itemize}
\item[a)]There exist absolute constants $C_1,C_2$ such that for any $1\leq p<\infty$ and
$\theta\in S^{n-1}$
$$
\frac{C_1}{p}\leq\E\,\psi_\theta\leq\frac{C_2}{\sqrt p}.
$$
Furthermore, for any $1\leq p<\infty$ and
$\theta\in S^{n-1}$
$$
\frac{C_1}{p}\leq\E\,\psi_\theta^2\leq\frac{C_2}{p}.
$$
\item[b)]There exist two absolute constants $C_1,C_2$ such that for any $1\leq p<\infty$
$$
\frac{C_1}{\sqrt n}\E\psi_{\theta_0}\Vert\theta\Vert_1\leq\E\psi_\theta\leq\frac{C_2}{p}\Vert\theta\Vert_1.
$$
\item[c)] There exists an absolute constant $C$ such that
$$
\E\psi_\theta\leq C\E\psi_{\theta_0}.
$$
Furthermore, there exists an absolute constant $c$ such that
$$
\sigma\left\{\theta\in S^{n-1}\,:\,\E\psi_\theta\geq c\E\psi_{\theta_0}\right\}\geq 1-\frac{1}{2^n}.
$$
\end{itemize}
\end{lemma}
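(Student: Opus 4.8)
The plan is to reduce everything to the random variable
$\phi_\theta=\big(\sum_{i=1}^n|g_i|^{2p-2}\theta_i^2\big)^{1/2}$, for which the equivalence $\E\psi_\theta\sim\E\phi_\theta$ has already been established above via Khintchine's inequality, and to use throughout the uniform (in $p\in[1,\infty)$) estimates $\Gamma(1/p)\sim p$ and $\Gamma(2-1/p),\Gamma(4-3/p)\sim 1$, which follow from Lemma \ref{LemmagS} and Stirling's formula. For parts (b) and (c) it is also convenient to note that, by Theorem \ref{thm:orlicz_p_norm} applied with $q=2$ and $X_i=|g_i|^{p-1}$, one has $\E\phi_\theta\sim\|\theta\|_{M_p}$, where the Orlicz function $M_p$ built from $|g|^{p-1}$ satisfies, after a direct evaluation of its defining integral, $M_p(s)\sim\frac1p\,s\min\{s,1\}$; thus $\|\cdot\|_{M_p}$ behaves like $p^{-1/2}\|\cdot\|_2$ on coordinates $\le p^{-1/2}$ and like $p^{-1}\|\cdot\|_1$ on the larger ones.

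\emph{Part (a).} I would argue directly with low moments of $\psi_\theta$. Since the $g_i$ are independent and symmetric, $\E\big(|g_i|^{p-1}\signum(g_i)\big)=0$, the off-diagonal terms vanish, and $\E\psi_\theta^2=\sum_i\theta_i^2\,\E|g|^{2p-2}=\E|g|^{2p-2}=\Gamma(2-\tfrac1p)/\Gamma(\tfrac1p)\sim\frac1p$, which is the asserted estimate for $\E\psi_\theta^2$; Cauchy--Schwarz then gives $\E\psi_\theta\le(\E\psi_\theta^2)^{1/2}\le\frac{C_2}{\sqrt p}$. For the lower bound, the same independence and symmetry give $\E\psi_\theta^4=\E|g|^{4p-4}\sum_i\theta_i^4+3(\E|g|^{2p-2})^2\sum_{i\ne j}\theta_i^2\theta_j^2\le\E|g|^{4p-4}+3(\E|g|^{2p-2})^2\le\frac{C}{p}$, using $\sum_i\theta_i^4\le1$, $\sum_{i\ne j}\theta_i^2\theta_j^2\le1$ and the $\Gamma$-estimates. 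Finally, Hölder's inequality applied to $\E\psi_\theta^2=\E(\psi_\theta^{2/3}\psi_\theta^{4/3})$ (exponents $\tfrac32,3$) yields $\E\psi_\theta^2\le(\E\psi_\theta)^{2/3}(\E\psi_\theta^4)^{1/3}$, hence $\E\psi_\theta\ge(\E\psi_\theta^2)^{3/2}(\E\psi_\theta^4)^{-1/2}\ge c\,(1/p)^{3/2}(1/p)^{-1/2}=\frac{C_1}{p}$.

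\emph{Part (b).} The upper bound is the triangle inequality: $\E\psi_\theta\le\sum_i|\theta_i|\,\E|g|^{p-1}=\|\theta\|_1\,\Gamma(1)/\Gamma(1/p)\le\frac{C_2}{p}\|\theta\|_1$. The lower bound is the heart of the matter; via $\E\psi_\theta\sim\E\phi_\theta$ it reduces to $\E\phi_\theta\ge\frac{c}{\sqrt n}\,\|\theta\|_1\,\E\phi_{\theta_0}$ (assume $\theta_1\ge\dots\ge\theta_n\ge0$). The bookkeeping tool is the sub-additivity estimate
\[
\E\Big(\sum_{i\in A}|g_i|^{2p-2}\Big)^{1/2}\ge\frac{|A|}{n}\,\E\Big(\sum_{i=1}^n|g_i|^{2p-2}\Big)^{1/2}=\frac{|A|}{\sqrt n}\,\E\phi_{\theta_0},\qquad A\subseteq\{1,\dots,n\},
\]
obtained by partitioning $\{1,\dots,n\}$ into $\lceil n/|A|\rceil$ blocks of size $\le|A|$, using $\sqrt{a+b}\le\sqrt a+\sqrt b$ and the fact that the $g_i$ are identically distributed. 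Split $\theta$ into its peak $P=\{i:\theta_i>p^{-1/2}\}$ (with $|P|\le p$) and its flat part $F=\{i:\theta_i\le p^{-1/2}\}$. On $F$ one has $\sum_{i\in F}\theta_i^4\le p^{-1}\|\theta_F\|_2^2$, and since $\E|g|^{4p-4}\sim p(\E|g|^{2p-2})^2$ the variance of $Y_F:=\sum_{i\in F}\theta_i^2|g_i|^{2p-2}$ is at most an absolute multiple of $(\E Y_F)^2/\|\theta_F\|_2^2$; a Paley--Zygmund argument gives $\E\phi_\theta\ge\E\sqrt{Y_F}\ge c\,\|\theta_F\|_2^2(\E|g|^{2p-2})^{1/2}$, improved to $c\,\|\theta_F\|_2(\E|g|^{2p-2})^{1/2}$ once $\|\theta_F\|_2$ is bounded below, with $\|\theta_F\|_2\ge\|\theta_F\|_1/\sqrt n$. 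On $P$, since $|P|\le p$, one uses $\E\phi_\theta\ge\theta_{\max}\,\E|g|^{p-1}$ and $\theta_{\max}\ge\|\theta_P\|_2/\sqrt{|P|}$. Comparing with the two elementary estimates $\E\phi_{\theta_0}\le C\min\{p^{-1/2},\sqrt n\,p^{-1}\}$ (from $\E\phi_{\theta_0}\le(\E|g|^{2p-2})^{1/2}$ and from $\|\cdot\|_2\le\|\cdot\|_1$), one checks that whichever of $P$, $F$ carries the larger part of $\|\theta\|_1$ closes the estimate, treating $p\le n$ and $p>n$ separately — the latter being easier because for $p>n$ the vector $(\theta_i|g_i|^{p-1})_i$ is with high probability dominated by a single coordinate, so that $\E\phi_\theta\sim\|\theta\|_1\,\E|g|^{p-1}$ outright. \textbf{The main obstacle is precisely this balancing:} a single dyadic block of $\theta$ fed into the sub-additivity estimate loses a factor $\log n$, so one is forced to interlace the concentration (Paley--Zygmund) regime with the sparse (single-coordinate) regime in order to obtain absolute constants.

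\emph{Part (c).} For the pointwise inequality, $\theta_0$ maximises $\E\phi_\theta$: since the $g_i$ are i.i.d.,
\[
\E\phi_\theta=\E_g\,\textrm{Ave}_\pi\Big(\sum_i\theta_{\pi(i)}^2|g_i|^{2p-2}\Big)^{1/2}\le\E_g\Big(\textrm{Ave}_\pi\sum_i\theta_{\pi(i)}^2|g_i|^{2p-2}\Big)^{1/2}=\E_g\Big(\tfrac1n\sum_i|g_i|^{2p-2}\Big)^{1/2}=\E\phi_{\theta_0}
\]
by concavity of the square root, and then $\E\psi_\theta\sim\E\phi_\theta\le\E\phi_{\theta_0}\sim\E\psi_{\theta_0}$ gives $\E\psi_\theta\le C\,\E\psi_{\theta_0}$. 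For the measure statement I would invoke the lower bound of (b): if $\|\theta\|_1\ge\delta\sqrt n$ then $\E\psi_\theta\ge\delta C_1\,\E\psi_{\theta_0}$, so it suffices that $\sigma\{\theta\in S^{n-1}:\|\theta\|_1\ge\delta\sqrt n\}\ge1-2^{-n}$ for a small absolute $\delta$. Writing $\theta=\gamma/|\gamma|$ with $\gamma$ a standard Gaussian vector, this follows from the exponential deviation bounds $\Pro(\|\gamma\|_1<c\delta n)\le(c'\delta)^n$ (Chernoff, using $\E e^{-\lambda|\gamma_1|}\le\sqrt{2/\pi}\,\lambda^{-1}$) and $\Pro(|\gamma|_2^2>8n)\le(8e^{-7})^{n/2}$, both of which drop below $2^{-n-1}$ once $\delta$ is chosen small.
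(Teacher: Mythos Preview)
Your arguments for parts (a) and (c) are correct, and in fact your route in (c) for $\E\psi_\theta\le C\,\E\psi_{\theta_0}$ is cleaner than the paper's: the paper invokes the Kwapie\'n--Sch\"utt permutation theorem (Theorem~\ref{AveragePermutations}) to control the average over permutations, whereas your one-line concavity argument
\[
\E\phi_\theta=\E_g\,\textrm{Ave}_\pi\Big(\sum_i\theta_{\pi(i)}^2|g_i|^{2p-2}\Big)^{1/2}\le\E_g\Big(\tfrac1n\sum_i|g_i|^{2p-2}\Big)^{1/2}=\E\phi_{\theta_0}
\]
does the job directly. In (a) the paper uses Jensen's inequality on $\phi_\theta$ for the lower bound, while you use a moment comparison $(\E\psi^2)^{3/2}(\E\psi^4)^{-1/2}$; both work. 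For the measure statement in (c) the paper uses a short polar-coordinates/Markov argument on $\int_{S^{n-1}}\|\theta\|_1^{-n}\,d\sigma=|B_1^n|/|B_2^n|$, but your Gaussian/Chernoff approach is equally valid.

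The genuine gap is in the \emph{lower bound of (b)}. Your peak/flat decomposition with Paley--Zygmund is not completed: you write that ``one checks that whichever of $P,F$ carries the larger part of $\|\theta\|_1$ closes the estimate'' and then immediately concede that a dyadic block argument ``loses a factor $\log n$'' and that the two regimes must be ``interlaced'' to avoid this --- but no such interlacing is supplied. Concretely, when the peak $P$ carries most of $\|\theta\|_1$, your single-coordinate bound $\E\phi_\theta\ge\theta_{\max}\E|g|^{p-1}\sim\theta_{\max}/p$ together with $\theta_{\max}\ge\|\theta_P\|_2/\sqrt{|P|}$ does not give $\frac{c}{\sqrt n}\|\theta_P\|_1\,\E\phi_{\theta_0}$ without further input, since $|P|$ can be as large as $p$; and on the flat part the Paley--Zygmund bound degrades when $\|\theta_F\|_2$ is small, which you note but do not resolve.

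The paper bypasses all of this with a single structural observation you mentioned at the outset but did not exploit: since $\E\phi_\theta\sim\|\theta\|_M$ for the Orlicz norm $\|\cdot\|_M$ of Theorem~\ref{thm:orlicz_p_norm}, and since $\|\cdot\|_M$ is $1$-symmetric, one has
\[
B_M\subseteq\frac{n}{\|(1,\dots,1)\|_M}\,B_1^n,\qquad\text{i.e.}\qquad \|\theta\|_M\ge\frac{\|(1,\dots,1)\|_M}{n}\,\|\theta\|_1=\frac{\|\theta_0\|_M}{\sqrt n}\,\|\theta\|_1,
\]
which is exactly the desired lower bound. (The inclusion follows from majorization: $\tfrac{\|x\|_1}{n}(1,\dots,1)$ is majorized by $(|x_1|,\dots,|x_n|)$, hence has smaller $\|\cdot\|_M$-norm.) This replaces your entire peak/flat analysis with one line and yields absolute constants immediately.
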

\begin{proof}
Let us first prove a):

By Jensen's inequality we have
\begin{eqnarray*}
\E\psi_\theta&\sim&\E\phi_\theta\geq\E\sum_{i=1}^n|g_i|^{p-1}\theta_i^2=\E|g|^{p-1}
=\frac{1}{\Gamma\left(\frac{1}{p}\right)}=\frac{1}{p}\frac{1}{
\Gamma\left(1+\frac{1}{p}\right)}\sim\frac{1}{p}.
\end{eqnarray*}
On the other hand, by H\"older's inequality
\begin{eqnarray*}
 \E\psi_\theta&\sim&\E\phi_\theta\leq\left(\E\sum_{i=1}^n|g_i|^{2p-2}\theta_i^2\right)^\frac{1}{2}
=\left(\E|g|^{2p-2}\right)^\frac{1}{2}=\left(\frac{\Gamma\left(\frac{2p-1}{p}
\right)}{\Gamma\left(\frac{1}{p}\right)}\right)^\frac{1}{2}\cr
&=&\left(\frac{1}{p}\frac{\Gamma\left(\frac{2p-1}{p}\right)}{\Gamma\left(1+\frac
{1}{p}\right)}\right)^\frac{1}{2}\sim\frac{1}{\sqrt p}.
\end{eqnarray*}

In the same way, taking independent Bernoulli random
variables and using Khintchine's inequality
\begin{eqnarray*}
\E\psi_\theta^2&\sim&\E\phi_\theta^2=\E|g|^{2p-2}
=\frac{\Gamma\left(\frac{2p-1}{p}\right)}{\Gamma\left(\frac{1}{p}\right)}=\frac{1}{p}\frac{\Gamma\left(\frac{2p-1}{p}\right)}{
\Gamma\left(1+\frac{1}{p}\right)}\sim\frac{1}{p}.
\end{eqnarray*}

Let us now prove b):

Notice that if $p=1$, by Khintchine's inequality $\E\psi_\theta\sim 1$ for every $\theta\in S^{n-1}$ and then the result follows. Assume that $p>1$. On the one hand, by Lemma \ref{LemmagS}
$$
\E\psi_\theta\leq\E\left(\sum_{i=1}^n|g_i|^{p-1}\left|\theta_i\right|\right)\leq\frac{c_2}{p}\Vert\theta\Vert_1.
$$
On the other hand,
$$
\E\psi_\theta\sim\E\phi_\theta=\E\Vert(|g_i|^{p-1}\theta_i)_{i=1}^n\Vert_2.
$$

Thus, applying Theorem \ref{thm:orlicz_p_norm} with $X_i=|g_i|^{p-1}$ and $q=2$, we have that
$$
\E\psi_{\theta}\sim\Vert\theta\Vert_M,
$$
with
\begin{align*}
M(s)&=2\int_0^s\left(\int_0^{t^{-\frac{1}{p-1}}}tx^{2p-2}\frac{e^{-x^p}}{\Gamma\left(1+\frac{1}{p}\right)}dx+\int_{t^{-\frac{1}{p-1}}}^\infty x^{p-1}\frac{e^{-x^p}}{\Gamma\left(1+\frac{1}{p}\right)}dx\right)dt\\
&=\frac{2}{p\Gamma\left(1+\frac{1}{p}\right)}\int_0^s\left(\int_0^{t^{-p^*}}tr^\frac{p-1}{p}e^{-r}dr+\int_{t^{-p^*}}^\infty e^{-r}dr\right)dt,\\
\end{align*}
where $p^*=\frac{p}{p-1}$ is the dual exponent of $p$. Let $B_M$ be the unit ball of $\Vert\cdot\Vert_M$. Taking into account that the norm $\Vert\cdot\Vert_M$ is 1-symmetric we have that
$$
B_M\subseteq\frac{n}{\Vert(1,\dots,1)\Vert_M}B_1^n.
$$
Thus, for any $\theta\in S^{n-1}$
$$
\Vert\theta\Vert_M\geq\frac{\Vert\theta_0\Vert_M}{\sqrt n}\Vert\theta\Vert_1
$$
and so
$$
\E\psi_\theta\geq\frac{c_1}{\sqrt n}\E\psi_{\theta_0}\Vert\theta\Vert_1.
$$

Finally, we prove c):

Since for any permutation $\pi$ of $\{1,\dots,n\}$
$$
\E\psi_\theta\sim\E\phi_\theta=\E\left(\sum_{k=1}^n|g_k|^{2p-2}\theta_k^2\right)^\frac{1}{2}=\E\left(\sum_{k=1}^n|g_k|^{2p-2}\theta_{\pi(k)}^2\right)^\frac{1}{2}
$$
we have that this expectation equals
$$
\E\textrm{Ave}_\pi\left(\sum_{k=1}^n|g_k|^{2p-2}\theta_{\pi(k)}^2\right)^\frac{1}{2}
$$
which, by Theorem \ref{AveragePermutations} applied to $a_{i,j}=|g_i|^{p-1}\theta_j$, is equivalent to
$$
\E\left(\frac{1}{n}\sum_{k=1}^{n}(|g_i|^{p-1}\theta_j)^*_k+\left(\frac{1}{n}\sum_{k=n+1}^{n^2}(|g_i|^{2p-2}\theta_j^2)^*_k\right)^\frac{1}{2}\right).
$$
Now, since by H\"older's inequality
$$
\frac{1}{n}\sum_{k=1}^{n}(|g_i|^{p-1}\theta_j)^*_k\leq\left(\frac{1}{n}\sum_{k=1}^{n}(|g_i|^{2p-2}\theta_j^2)^*_k\right)^\frac{1}{2}
$$
we have that
\begin{eqnarray*}
&&\left(\frac{1}{n}\sum_{k=1}^{n}(|g_i|^{p-1}\theta_j)^*_k+\left(\frac{1}{n}\sum_{k=n+1}^{n^2}(|g_i|^{2p-2}\theta_j^2)^*_k\right)^\frac{1}{2}\right)\cr
&\leq&\left(\left(\frac{1}{n}\sum_{k=1}^{n}(|g_i|^{2p-2}\theta_j^2)^*_k\right)^\frac{1}{2}+\left(\frac{1}{n}\sum_{k=n+1}^{n^2}(|g_i|^{2p-2}\theta_j^2)^*_k\right)^\frac{1}{2}\right)\cr
&\leq&\sqrt2\left(\frac{1}{n}\sum_{i,j=1}^{n}|g_i|^{2p-2}\theta_j^2\right)^\frac{1}{2}=\sqrt2\phi_{\theta_0}\cr
\end{eqnarray*}
and taking expectation and using Khintchine's inequality again we obtain
$$
\E\psi_\theta\leq c_2\E\psi_{\theta_0}.
$$
Besides, by Markov's inequality for any $A\geq 0$
$$
\frac{|B_1^n|}{|B_2^n|}=\int_{S^{n-1}}\frac{1}{\Vert\theta\Vert_1^n}d\sigma(\theta)\geq\frac{1}{A^n}\sigma\{\theta\in S^{n-1}\,:\,\Vert\theta\Vert_1\leq A\}.
$$
Thus, since $\left(\frac{|B_1^n|}{|B_2^n|}\right)^\frac{1}{n}\leq\frac{C}{\sqrt n}$, taking $A=\frac{1}{2C}\sqrt{n}$, we obtain that
$$
\sigma\left\{\theta\in S^{n-1}\,:\,\Vert\theta\Vert_1\leq \frac{1}{2C}\sqrt{n}\right\}\leq\frac{1}{2^n}
$$
and, by part b) in this lemma, there exists an absolute constant $c$ such that
$$
\sigma\{\theta\in S^{n-1}\,:\,\E\psi_\theta\geq c\E\psi_{\theta_0}\}\geq1-\frac{1}{2^n},
$$
which finishes the proof.
\end{proof}

In both parts b) and c) in Lemma \ref{ExpectationPsi} we have related $\E\psi_\theta$ with $\E\psi_{\theta_0}$. In the following lemma we are going to estimate the value of $\E\psi_{\theta_0}$.

\begin{lemma}\label{ExpectationPsiTheta0}
Let  $\theta_0=\left(\frac{1}{\sqrt n},\dots,\frac{1}{\sqrt n}\right)$. Then, if $1 \leq p\leq n$
$$
\E\psi_{\theta_0}\sim\frac{1}{\sqrt p}
$$
and, if $p=n^\gamma$ with $\gamma>1$,
$$
\E\psi_{\theta_0}\sim\frac{\sqrt n}{p}=\frac{1}{p^{1-\frac{1}{2\gamma}}}.
$$
\end{lemma}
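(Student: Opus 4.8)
The plan is to reduce the whole statement to a single estimate, namely for $\E\sqrt{T}$ with $T=\sum_{i=1}^n|g_i|^{2p-2}$. Indeed, as already observed just before Lemma \ref{ExpectationPsi} (symmetrize and apply Khintchine's inequality), $\E\psi_{\theta_0}\sim\E\phi_{\theta_0}=\frac{1}{\sqrt n}\,\E\left(\sum_{i=1}^n|g_i|^{2p-2}\right)^{1/2}=\frac{1}{\sqrt n}\,\E\sqrt{T}$. Hence it suffices to prove that $\E\sqrt T\sim\sqrt{n/p}$ when $1\le p\le n$ and $\E\sqrt T\sim n/p$ when $p>n$; dividing by $\sqrt n$ then yields the two displayed estimates, the second one being just the rephrasing $\frac{\sqrt n}{p}=p^{-(1-\frac1{2\gamma})}$ valid when $p=n^\gamma$.

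First I would record the first two moments of $T$. By Lemma \ref{LemmagS}, $\E|g|^{2p-2}=\frac{\Gamma(2-1/p)}{\Gamma(1/p)}=\frac1p\,\frac{\Gamma(2-1/p)}{\Gamma(1+1/p)}$ and $\E|g|^{4p-4}=\frac{\Gamma(4-3/p)}{\Gamma(1/p)}=\frac1p\,\frac{\Gamma(4-3/p)}{\Gamma(1+1/p)}$; since for every $p\ge1$ the three quantities $\Gamma(2-1/p)$, $\Gamma(4-3/p)$ and $\Gamma(1+1/p)$ are bounded above and below by absolute positive constants, this gives $\E|g|^{2p-2}\sim\frac1p$ and $\E|g|^{4p-4}\sim\frac1p$. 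Consequently $\E T=n\,\E|g|^{2p-2}\sim\frac np$, and expanding the square and using independence of the $g_i$, $\E T^2=n\,\E|g|^{4p-4}+n(n-1)\bigl(\E|g|^{2p-2}\bigr)^2\sim\frac np+\frac{n^2}{p^2}$. In particular $\E T^2\sim(\E T)^2$ when $p\le n$, whereas $\E T^2\sim\frac np$ when $p\ge n$.

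For the lower bound on $\E\sqrt T$ I would interpolate the $L^1$ norm of $T$ between its $L^{1/2}$ and $L^2$ norms, $\E T\le(\E\sqrt T)^{2/3}(\E T^2)^{1/3}$, i.e. $\E\sqrt T\ge(\E T)^{3/2}(\E T^2)^{-1/2}$; inserting the moment estimates this is $\gtrsim\sqrt{n/p}$ when $p\le n$ and $\gtrsim n/p$ when $p\ge n$. For the matching upper bounds, Jensen's inequality gives $\E\sqrt T\le\sqrt{\E T}\lesssim\sqrt{n/p}$, which is what we want for $p\le n$; and the pointwise inequality $\sqrt T=\bigl(\sum_i|g_i|^{2p-2}\bigr)^{1/2}\le\sum_i|g_i|^{p-1}$ gives $\E\sqrt T\le n\,\E|g|^{p-1}=\frac{n}{\Gamma(1/p)}\sim\frac np$, which is what we want for $p\ge n$. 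Putting the two sides together proves $\E\sqrt T\sim\sqrt{n/p}$ for $1\le p\le n$ and $\E\sqrt T\sim n/p$ for $p>n$, and the lemma follows.

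I do not anticipate a genuine obstacle: the argument is just the two elementary $\Gamma$-function moment computations together with the standard Jensen/interpolation sandwiching of $\E\sqrt T$. The only point that requires a little care is keeping track of which of the two terms $\frac np$ and $\frac{n^2}{p^2}$ dominates $\E T^2$ — and this is precisely what produces the change of behaviour at $p=n$ and hence the two different formulas appearing in the statement.
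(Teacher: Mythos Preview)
Your argument is correct. The key observation — that for $\theta_0$ the problem reduces to controlling $\E\sqrt{T}$ with $T=\sum_{i}|g_i|^{2p-2}$, and that this can be sandwiched between $(\E T)^{3/2}(\E T^2)^{-1/2}$ and $\min\{\sqrt{\E T},\,\sum_i\E|g_i|^{p-1}\}$ using only H\"older and the elementary moment identities from Lemma~\ref{LemmagS} — is clean and complete. The two regimes arise exactly from whether $n/p$ or $n^2/p^2$ dominates $\E T^2$, as you note.

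This is a genuinely different route from the paper's proof. The paper obtains the upper bounds the same way you do (Jensen for $p\le n$; the $\ell_2\le\ell_1$ bound, equivalently Lemma~\ref{ExpectationPsi}\,b), for $p>n$), but for the lower bounds it invokes Theorem~\ref{thm:orlicz_p_norm} to express $\E\phi_{\theta_0}$ as a Luxemburg norm $\Vert(1,\dots,1)\Vert_M$ with respect to an explicit Orlicz function $M$, and then carries out a fairly delicate case analysis of $M$ (splitting further into $p\le Cn$, $Cn\le p\le n$, and $p>n$) to show $M(1/\rho)\ge 1/n$ for the appropriate $\rho$. Your second-moment/H\"older interpolation bypasses all of this machinery. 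The trade-off is that the Orlicz approach is what the paper needs anyway to handle general $\theta$ in Lemma~\ref{ExpectationPsi}\,b), so reusing it for $\theta_0$ costs the authors little extra; for the special direction $\theta_0$, however, your argument is shorter and more elementary.
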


\begin{proof}
By Lemma \ref{ExpectationPsi} $\E\psi_{\theta_0}\leq\frac{C}{\sqrt p}$. Let us prove $\E\psi_{\theta_0}\geq\frac{c}{\sqrt p}$. We have seen that, by Khintchine's inequality,
$$
\E\psi_{\theta_0}\sim\frac{1}{\sqrt n}\Vert(|g_i|^{p-1})_{i=1}^n\Vert_2.
$$
Thus, applying Theorem \ref{thm:orlicz_p_norm} with $X_i=|g_i|^{p-1}$ and $q=2$, we have that
$$
\E\psi_{\theta_0}\sim\frac{1}{\sqrt n}\Vert(1,\dots,1)\Vert_M,
$$
with
\begin{eqnarray*}
M(s)&=&2\int_0^s\left(\int_0^{t^{-\frac{1}{p-1}}}tx^{2p-2}\frac{e^{-x^p}}{\Gamma\left(1+\frac{1}{p}\right)}dx+\int_{t^{-\frac{1}{p-1}}}^\infty x^{p-1}\frac{e^{-x^p}}{\Gamma\left(1+\frac{1}{p}\right)}dx\right)dt\cr
&=&\frac{2}{p\Gamma\left(1+\frac{1}{p}\right)}\int_0^s\left(\int_0^{t^{-p^*}}tr^\frac{1}{p^*}e^{-r}dr+\int_{t^{-p^*}}^\infty e^{-r}dr\right)dt\cr
&=&\frac{2\left(1-\frac{1}{p}\right)}{p\Gamma\left(1+\frac{1}{p}\right)}\int_0^st\int_0^{t^{-p^*}}r^{-\frac{1}{p}}e^{-r}drdt,\cr
\end{eqnarray*}
where the last identity follows from integration by parts and $p^*=\frac{p}{p-1}$ is the dual exponent of $p$.

On the one hand, since
\begin{eqnarray*}
M(s)&\geq&\frac{2}{p\Gamma\left(1+\frac{1}{p}\right)}\int_{\frac{s}{2}}^s\int_0^{t^{-p^*}}tr^\frac{1}{p^*}e^{-r}drdt\cr
&\geq&\frac{s}{p\Gamma\left(1+\frac{1}{p}\right)}\int_{\frac{s}{2}}^s\int_0^{s^{-p^*}}r^\frac{1}{p^*}e^{-r}drdt\cr
&=&\frac{s^2}{2p\Gamma\left(1+\frac{1}{p}\right)}\left(\Gamma\left(2-\frac{1}{p}\right)-\int_{s^{-p^*}}^\infty r^\frac{1}{p^*}e^{-r}dr\right),\cr
\end{eqnarray*}
we have that if $\rho=c\sqrt{\frac{n}{p}}$ and $p\leq \frac{c^2}{\alpha^{\frac{2}{p^*}}}n$ with $\alpha\geq 1$
\begin{eqnarray*}
M\left(\frac{1}{\rho}\right)&\geq&\frac{1}{c^2n\Gamma\left(1+\frac{1}{p}\right)}\left(\Gamma\left(2-\frac{1}{p}\right)-\int_{\rho^{p^*}}^\infty r^\frac{1}{p^*}e^{-r}dr\right)\cr
&\geq&\frac{1}{c^2n\Gamma\left(1+\frac{1}{p}\right)}\left(\Gamma\left(2-\frac{1}{p}\right)-\int_\alpha^\infty r^\frac{1}{p^*}e^{-r}dr\right)\cr
&\geq&\frac{1}{c^2n\Gamma\left(1+\frac{1}{p}\right)}\left(\Gamma\left(2-\frac{1}{p}\right)-\int_\alpha^\infty re^{-r}dr\right)\cr
&=&\frac{1}{c^2n\Gamma\left(1+\frac{1}{p}\right)}\left(\Gamma\left(2-\frac{1}{p}\right)-(\alpha+1)e^{-\alpha}\right)\cr
\end{eqnarray*}
Taking $\alpha$ a constant big enough and then $c$ a constant small enough we have that if $p\leq Cn$ for some absolute constant $C<1$,
$$
M\left(\frac{1}{c\sqrt{\frac{n}{p}}}\right)\geq\frac{1}{n}
$$
and so
$$
\Vert(1,\dots,1)\Vert_M\geq c\sqrt{\frac{n}{p}}.
$$
Consequently,
$$
\E\psi_{\theta_0}\geq\frac{c}{\sqrt p}.
$$

On the other hand, since
\begin{eqnarray*}
M(s)&=&\frac{2\left(1-\frac{1}{p}\right)}{p\Gamma\left(1+\frac{1}{p}\right)}\int_0^st\int_0^{t^{-p^*}}r^{-\frac{1}{p}}e^{-r}drdt\cr
&\geq&\frac{2\left(1-\frac{1}{p}\right)}{p\Gamma\left(1+\frac{1}{p}\right)}\int_0^st\int_0^{t^{-p^*}}t^{\frac{1}{p-1}}e^{-r}drdt\cr
&=&\frac{2\left(1-\frac{1}{p}\right)}{p\Gamma\left(1+\frac{1}{p}\right)}\int_0^st^{1+\frac{1}{p-1}}\left(1-e^{-t^{-p^*}}\right)dt\cr
&\geq&\frac{2\left(1-\frac{1}{p}\right)}{p\Gamma\left(1+\frac{1}{p}\right)}\int_0^st^{1+\frac{1}{p-1}}\left(1-e^{-s^{-p^*}}\right)dt\cr
&=&\frac{2\left(1-\frac{1}{p}\right)s^{2+\frac{1}{p-1}}}{p\left(2+\frac{1}{p-1}\right)\Gamma\left(1+\frac{1}{p}\right)}\left(1-e^{-s^{-p^*}}\right).\cr
\end{eqnarray*}
we have that if $Cn\leq p\leq n$, $\rho=\alpha\sqrt{\frac{n}{p}}$ with $\alpha\leq 1$, there is an absolute constant $c$ such that
\begin{eqnarray*}
M\left(\frac{1}{\rho}\right)&\geq&\frac{cp^\frac{1}{2p-2}}{\alpha^{2+\frac{1}{p-1}}n^{1+\frac{1}{2p-2}}}\left(1-e^{-\alpha^{p^*}}\right)\cr
&\geq&\frac{cp^\frac{1}{2p-2}}{\alpha^{2+\frac{1}{p-1}}n^{1+\frac{1}{2p-2}}}\left(1-e^{-\alpha^{p^*}}\right)\cr
&\geq&\frac{c}{\alpha n},
\end{eqnarray*}
since $p\sim n$. If we take $\alpha$ a constant small enough,
$$
M\left(\frac{1}{\alpha\sqrt\frac{n}{p}}\right)\geq\frac{1}{n}
$$
and so
$$
\Vert(1,\dots,1)\Vert_M\geq \alpha\sqrt\frac{n}{p}.
$$
Consequently,
$$
\E\psi_{\theta_0}\geq \frac{c}{\sqrt p}
$$
also if $Cn\leq p\leq n$.

By Lemma \ref{ExpectationPsi},
$$
\E\psi_{\theta_0}\leq\frac{c_2}{p}\Vert\theta_0\Vert_1= \frac{c_2\sqrt n}{p}
$$
Consequently, if $p=n^\gamma$ with $\gamma>1$,
$$
\E\psi_{\theta_0}\leq\frac{c_2\sqrt n}{p}=\frac{c_2}{p^{1-\frac{1}{2\gamma}}}.
$$
On the other hand, since $p\geq n$, if $n\geq2$
\begin{eqnarray*}
M(s)&=&\frac{2\left(1-\frac{1}{p}\right)}{p\Gamma\left(1+\frac{1}{p}\right)}\int_0^st\int_0^{t^{-p^*}}r^{-\frac{1}{p}}e^{-r}drdt\cr
&\geq&\frac{2\left(1-\frac{1}{p}\right)}{p\Gamma\left(1+\frac{1}{p}\right)}\int_0^st\int_0^{t^{-p^*}}r^{-\frac{1}{p}}e^{-t^{-p^*}}drdt\cr
&=&\frac{2}{p\Gamma\left(1+\frac{1}{p}\right)}\int_0^se^{-t^{-p^*}}dt\cr
&\geq&\frac{2}{p\Gamma\left(1+\frac{1}{p}\right)}\int_{s2^{-\frac{1}{p^*}}}^se^{-t^{-p^*}}dt\cr
&\geq&\frac{2s}{p\Gamma\left(1+\frac{1}{p}\right)}\left(1-2^{-\frac{1}{p^*}}\right)e^{-2s^{-p^*}}\cr
&\geq&\frac{2s}{p\Gamma\left(1+\frac{1}{p}\right)}\left(1-2^{-\frac{n-1}{n}}\right)e^{-2s^{-p^*}}\cr
&\geq&\frac{\sqrt{2}(\sqrt{2}-1)s}{p\Gamma\left(1+\frac{1}{p}\right)}e^{-2s^{-p^*}}.\cr
\end{eqnarray*}
and then
$$
M\left(\frac{1}{s}\right)\geq\frac{\sqrt{2}(\sqrt{2}-1)}{ps}e^{-2s^{p^*}}.
$$
Thus, if $p=n^\gamma$ and we take $s=\alpha n^{1-\gamma}$, with $\alpha\leq 1$
\begin{eqnarray*}
M\left(\frac{1}{\alpha n^{1-\gamma}}\right)&\geq&\frac{\sqrt{2}(\sqrt{2}-1)}{n}e^{-2\alpha^\frac{n^\gamma}{n^\gamma-1} n^{\frac{(1-\gamma)n^\gamma}{n^\gamma-1}}}\cr
&\geq&\frac{\sqrt{2}(\sqrt{2}-1)}{n}e^{-2\alpha^\frac{n^\gamma}{n^\gamma-1}}\cr
&\geq&\frac{\sqrt{2}(\sqrt{2}-1)}{n}e^{-2\alpha}\cr
&\geq&\frac{1}{n}\cr
\end{eqnarray*}
if we take $\alpha\leq \frac{1}{2}\log\left(\sqrt{2}(\sqrt{2}-1)\right)$. Consequently,
$$
\E\psi_{\theta_0}\geq cn^{\frac{1}{2}-\gamma}=\frac{c\sqrt{n}}{p}=\frac{c}{p^{1-\frac{1}{2\gamma}}}.
$$
\end{proof}

Hence, we obtain the following
\begin{cor}\label{CorollaryExpectationPsi}
If $1\leq p\leq  n$. Then,
$$
\sigma\left\{\theta\in S^{n-1}\,:\,\E\psi_\theta\sim\frac{1}{\sqrt p}\right\}\geq 1-\frac{1}{2^n}.
$$
If $p>n$, then for every $\theta\in S^{n-1}$
$$
\E\psi_\theta\sim\frac{1}{p}\Vert\theta\Vert_1.
$$
\end{cor}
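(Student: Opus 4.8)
The plan is to deduce the corollary directly by feeding the value of $\E\psi_{\theta_0}$ computed in Lemma \ref{ExpectationPsiTheta0} into the direction‑free comparisons of Lemma \ref{ExpectationPsi}, treating the two ranges $1\le p\le n$ and $p>n$ separately.

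\emph{Range $1\le p\le n$.} Here part a) of Lemma \ref{ExpectationPsi} already supplies the universal upper bound $\E\psi_\theta\le C_2/\sqrt p$ for every $\theta\in S^{n-1}$, so all that is missing is a matching lower bound on a set of nearly full measure. Part c) of the same lemma provides an absolute constant $c$ with
$$
\sigma\left\{\theta\in S^{n-1}\,:\,\E\psi_\theta\ge c\,\E\psi_{\theta_0}\right\}\ge 1-\frac{1}{2^n},
$$
while Lemma \ref{ExpectationPsiTheta0} tells us that in this range $\E\psi_{\theta_0}\sim 1/\sqrt p$. Hence on that set $\E\psi_\theta\gtrsim 1/\sqrt p$, and together with the upper bound this gives $\E\psi_\theta\sim 1/\sqrt p$ off a set of measure at most $2^{-n}$.

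\emph{Range $p>n$.} Assuming $n\ge 2$ (the case $n=1$ being trivial), I would write $p=n^\gamma$ with $\gamma=\log p/\log n>1$, so that the second formula of Lemma \ref{ExpectationPsiTheta0} applies and yields $\E\psi_{\theta_0}\sim \sqrt n/p$. On the other hand, part b) of Lemma \ref{ExpectationPsi} states that for every $\theta\in S^{n-1}$
$$
\frac{C_1}{\sqrt n}\,\E\psi_{\theta_0}\,\Vert\theta\Vert_1\le \E\psi_\theta\le \frac{C_2}{p}\,\Vert\theta\Vert_1 .
$$
The right‑hand inequality is already the desired upper bound; substituting $\E\psi_{\theta_0}\sim \sqrt n/p$ into the left‑hand inequality gives $\E\psi_\theta\gtrsim \Vert\theta\Vert_1/p$, so $\E\psi_\theta\sim \Vert\theta\Vert_1/p$ for every $\theta$.

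I do not expect any genuine obstacle here beyond correctly quoting the two preceding lemmas; the only points deserving a moment's care are that every $p>n$ is indeed of the form $n^\gamma$ with $\gamma>1$, so that the relevant estimate of Lemma \ref{ExpectationPsiTheta0} is available, and that the probabilistic estimate from part c) of Lemma \ref{ExpectationPsi} must be combined with the correct, range‑dependent size of $\E\psi_{\theta_0}$.
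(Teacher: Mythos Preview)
Your proposal is correct and follows essentially the same route as the paper: the paper's proof simply states that the first estimate follows from part c) of Lemma~\ref{ExpectationPsi} together with Lemma~\ref{ExpectationPsiTheta0}, and the second from part b) of Lemma~\ref{ExpectationPsi} together with Lemma~\ref{ExpectationPsiTheta0}. The only cosmetic difference is that for the upper bound in the range $1\le p\le n$ you invoke part a) directly, whereas the paper would read it off the first inequality of part c) (namely $\E\psi_\theta\le C\,\E\psi_{\theta_0}$) combined with $\E\psi_{\theta_0}\sim 1/\sqrt p$; both are immediate.
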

\begin{proof}
The first estimate is a consequence of part c) in Lemma \ref{ExpectationPsi} and Lemma \ref{ExpectationPsiTheta0}. The second estimate is a consequence of part b) in Lemma \ref{ExpectationPsi} and Lemma \ref{ExpectationPsiTheta0}.
\end{proof}
\begin{rmk}
Actually, it can be proved that for any $n\in\N$ and any fixed $\theta\in S^{n-1}$, $\lim_{p\to\infty}p\E\psi_\theta=\Vert\theta\Vert_1$.
\end{rmk}

\begin{lemma}\label{BoundLessTerms}
Let $I\subseteq\{1,\dots, n\}$ be any set of indices and $\theta\in S^{n-1}$. Then,
$$
\frac{\E\left|\sum_{i\in I}|g_i|^{p-1}\signum(g_i)\theta_i\right|}{\E\left|\sum_{i=1}^n|g_i|^{p-1}\signum(g_i)\theta_i\right|}\leq 1
$$
and
$$
\frac{\E\left(\sum_{i\in I}|g_i|^{2p-2}\theta_i^2\right)^\frac{1}{2}}{\E\left(\sum_{i=1}^n|g_i|^{2p-2}\theta_i^2\right)^\frac{1}{2}}\leq C,
$$
where $C$ is an absolute constant.
\end{lemma}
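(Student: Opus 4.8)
The plan is to handle the two inequalities separately; both are short, with the first being the one carrying actual content. For the first inequality I would use that the summands $X_i:=|g_i|^{p-1}\signum(g_i)\theta_i$, $i=1,\dots,n$, are independent and symmetric: each $g_i$ is symmetric about the origin, so each $X_i$ is symmetric and in particular $\E X_i=0$. Let $\mathcal F_I$ be the $\sigma$-algebra generated by $(X_i)_{i\in I}$. By independence, $\E\bigl[\sum_{i\notin I}X_i\mid\mathcal F_I\bigr]=\sum_{i\notin I}\E X_i=0$, so that
$$
\sum_{i\in I}X_i=\E\left[\,\sum_{i=1}^nX_i\;\Big|\;\mathcal F_I\right].
$$
Taking absolute values and applying the conditional Jensen inequality gives $\E\bigl|\sum_{i\in I}X_i\bigr|\leq\E\bigl|\sum_{i=1}^nX_i\bigr|$, which is precisely the first assertion, with constant $1$.

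For the second inequality I would simply note the pointwise domination $\sum_{i\in I}|g_i|^{2p-2}\theta_i^2\leq\sum_{i=1}^n|g_i|^{2p-2}\theta_i^2$; taking square roots and expectations immediately bounds the ratio by $1$. (The form that is convenient in the sequel can alternatively be obtained by applying Khintchine's inequality coordinatewise, exactly as in the derivation of $\E\psi_\theta\sim\E\phi_\theta$ but for the vector supported on $I$, to get $\E\bigl(\sum_{i\in I}|g_i|^{2p-2}\theta_i^2\bigr)^{1/2}\sim\E\bigl|\sum_{i\in I}|g_i|^{p-1}\signum(g_i)\theta_i\bigr|$, and then combining the first part of the lemma with $\E\bigl|\sum_{i=1}^n|g_i|^{p-1}\signum(g_i)\theta_i\bigr|\sim\E\bigl(\sum_{i=1}^n|g_i|^{2p-2}\theta_i^2\bigr)^{1/2}$.)

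I do not anticipate any real obstacle here. The only subtlety worth flagging is in the first inequality: it genuinely relies on the $X_i$ being symmetric and independent, not merely centered, since that is what lets us write the partial sum over $I$ as the conditional expectation of the full sum given $\mathcal F_I$. Once that observation is in place, everything reduces to conditional Jensen and Khintchine's inequality.
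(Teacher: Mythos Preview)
Your proof is correct, and for both parts you take a slightly different (and arguably cleaner) route than the paper.

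For the first inequality the paper argues via the triangle inequality: writing $X_i=|g_i|^{p-1}\signum(g_i)\theta_i$, one has
\[
2\Bigl|\sum_{i\in I}X_i\Bigr|\leq\Bigl|\sum_{i\in I}X_i+\sum_{i\in I^c}X_i\Bigr|+\Bigl|\sum_{i\in I}X_i-\sum_{i\in I^c}X_i\Bigr|,
\]
and then uses symmetry of the $g_i$ to see that both terms on the right have the same expectation $\E\psi$. Your conditional Jensen argument is an equally standard alternative and yields the same constant $1$. One small correction to your closing commentary: your argument actually only uses that the $X_i$ are independent and \emph{centered}; symmetry enters solely to guarantee $\E X_i=0$. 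It is the paper's triangle-inequality approach that genuinely exploits symmetry (to identify the two expectations on the right).

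For the second inequality the paper deduces it from the first via Khintchine's inequality, which is why the stated bound is an absolute constant $C$ rather than $1$. Your pointwise-monotonicity observation is simpler and gives the sharper constant $1$ directly; the Khintchine detour you sketch as an alternative is exactly the paper's argument.
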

\begin{proof}
By the triangle inequality, we have that
\begin{align*}
2\left|\sum_{i\in I}|g_i|^{p-1}\signum(g_i)\theta_i\right|&\leq\left|\sum_{i\in I}|g_i|^{p-1}\signum(g_i)\theta_i+\sum_{i\in I^c}|g_i|^{p-1}\signum(g_i)\theta_i\right|\cr
&+\left|\sum_{i\in I}|g_i|^{p-1}\signum(g_i)\theta_i-\sum_{i\in I^c}|g_i|^{p-1}\signum(g_i)\theta_i\right|.\cr
\end{align*}
Since the random variables $g_i$ are symmetric, the expected value of the second term equals the expected value of the first term and then
$$
2\E\left|\sum_{i\in I}|g_i|^{p-1}\signum(g_i)\theta_i\right|\leq 2\E\left|\sum_{i=1}^n|g_i|^{p-1}\signum(g_i)\theta_i\right|=2\E\psi,
$$
which proves the first inequality. The second inequality is a consequence of the first one and Khintchine's inequality.
\end{proof}

\section{The variance conjecture on hyperplane projections of
$B_p^n$}\label{proofProjections}
In this section we prove Theorem \ref{projectionsB_p^n}.
\begin{proof}
First of all, notice that, by Proposition 4 in \cite{AB1}, for any $\xi\in
S^{n-1}\cap H$ we have that if $X$ is a random vector uniformly distributed on $P_H(B_p^n)$
$$
\E\,\langle |B_p^n|^{-\frac{1}{n}}X,\xi\rangle^2\sim L_{B_p^n}^2\sim1.
$$
Thus,
$$
\E\,\langle
X,\xi\rangle^2\sim|B_p^n|^{\frac{2}{n}}\sim\frac{1}{n^\frac{2}{p}}
$$
and so
$$
\lambda_X^2\E\,|X|^2\sim n^{1-\frac{4}{p}}.
$$
Now, using the probabilistic representation of $X$ mentioned in Section \ref{Preliminaries}, we have that
\begin{eqnarray*}
\textrm{Var}\,|X|^2&=&\E\,|X|^4-(\E\,|X|^2)^2\cr
&=&\frac{1}{\E \psi}\E
\left|P_H\left(\frac{G}{S}
\right)\right|^4\psi -\frac{1}{(\E \psi)^2}\left(\E
\left|P_H\left(\frac{G}{S}
\right)\right|^2\psi\right)^2\cr
&=&\frac{1}{\E \psi}\E
\left(\left|\frac{G}{S}
\right|^2-\left\langle\frac{G}{S}
 ,\theta\right\rangle^2\right)^2\psi\\
&-&\frac{1}{(\E \psi)^2}
\left(\E\left|\frac{G}{S}
\right|^2\psi-\E\left\langle\frac{G}{S}
 ,\theta\right\rangle^2\psi\right)^2\cr
&\leq&
\frac{1}{\E \psi}
\E\left|\frac{G}{S}
\right|^4\psi-\left(\frac{1}{\E \psi}\E\left|\frac{G}{S}
\right|^2\psi\right)^2\cr
&+&\frac{1}{\E \psi}\E\left\langle\frac{G}{S}
 ,\theta\right\rangle^4\psi +2\frac{1}{(\E
\psi)^2}\E\left|\frac{G}{S}\right|^2\psi
 \,
 \E\left\langle\frac{G}{S} ,\theta\right\rangle^2\psi\cr
&=&\sum_{i=1}^n\left(\frac{1}{\E \psi}\E\frac{g_i^4}{S^4}\psi-\left(\frac{1}{\E
\psi}\E\frac{g_i^2}{S^2}
\psi
\right)^2\right)\cr
&+&\sum_{i\neq
j}\left(\frac{1}{\E \psi}\E\frac{g_i^2g_j^2}{S^4}\psi-\frac{1}{(\E
\psi)^2}\E\frac{g_i^2}{S^2}\psi\E\frac{g_j^2
}{S^2}\psi\right)\cr
&+&\frac{1}{\E \psi}\E\left\langle\frac{G}{S} ,\theta\right\rangle^4\psi
+\frac{2}{(\E \psi)^2}\E\left|\frac{G}{S}\right|^2\psi\,
\E\left\langle\frac{G}{S} ,\theta\right\rangle^2\psi.\cr
\end{eqnarray*}
We are going to bound from above each one of the four summands in the last expression. The upper bound of the first, third, and fourth term will be of the order that would give an absolute constant in the variance conjecture. The estimate we obtain for the second term will be the one that will cause the constant to depend on $p$ if $p\leq n$.
\subsection{Upper bound for the last term}
~\\
By the independence of $\frac{G}{S}$ and $S$ we have that for any $\theta\in S^{n-1}$
\begin{eqnarray*}
\frac{1}{\E \psi}\E\left\langle\frac{G}{S},\theta\right\rangle^2\psi&=&\frac{\E\left\langle\frac{G}{S},\theta\right\rangle^2\left|\sum_{i=1}
^n
\frac{|g_i|^{p-1}}{S^{p-1}}\signum (g_i) \theta_i\right|}{\E \left|\sum_{i=1}
^n
\frac{|g_i|^{p-1}}{S^{p-1}}\signum (g_i) \theta_i\right|}\cr
&=&\frac{\E
S^{p-1}}{\E
S^{p+1}}\frac{\E(\sum_{i=1}^ng_i\theta_i)^2\left|\sum_{i=1}^n|g_i|^{p-1}
\signum(g_i)\theta_i\right|}{\E\left|\sum_{i=1}^n|g_i|^{p-1}
\signum(g_i)\theta_i\right|}.\cr
\end{eqnarray*}
Taking $\varepsilon_1,\dots, \varepsilon_n$
independent Bernoulli
random
variables also independent with respect to the $g_i$'s we have that
\begin{eqnarray*}
&&\frac{\E
S^{p-1}}{\E
S^{p+1}}\frac{\E(\sum_{i=1}^ng_i\theta_i)^2\left|\sum_{i=1}^n|g_i|^{p-1}
\signum(g_i)\theta_i\right|}{\E\left|\sum_{i=1}^n|g_i|^{p-1}
\signum(g_i)\theta_i\right|}\cr
&=&\frac{\E S^{p-1}}{\E
S^{p+1}}\frac{\E_{\varepsilon}\E_g(\sum_{i=1}
^n\varepsilon_ig_i\theta_i)^2\left|\sum_{i=1}^n|g_i|^{p-1}
\signum(g_i)\varepsilon_i\theta_i\right|}{\E_g\left|\sum_{i=1}^n|g_i|^{p-1}
\signum(g_i)\theta_i\right|}\cr
&\leq&\frac{\E S^{p-1}}{\E
S^{p+1}}\frac{\E_g\left(\E_\varepsilon(\sum_{i=1}
^n\varepsilon_ig_i\theta_i)^4\right)^\frac{1}{2}\left(\E_\varepsilon\left|\sum_{
i=1}^n|g_i|^{p-1}
\signum(g_i)\varepsilon_i\theta_i\right|^2\right)^\frac{1}{2}}{\E_g\left|\sum_{
i=1}^n|g_i|^{p-1}
\signum(g_i)\theta_i\right|}.\cr
\end{eqnarray*}
By Khintchine's inequality, Lemma \ref{LemmagS}, Lemma \ref{ExpectationPsi} and Lemma \ref{BoundLessTerms}
\begin{eqnarray*}
\frac{1}{\E\psi}\E\left\langle\frac{G}{S},\theta\right\rangle^2\psi&\leq&C\frac{
\E S^{p-1}}{\E
S^{p+1}}\frac{\E_g\left(\sum_{i=1}^ng_i^2\theta_i^2\right)\left(\sum_{j=1}
^n|g_j|^{2p-2}
\theta_j^2\right)^\frac{1}{2}}{\E_g\left|\sum_{j=1}^n|g_j|^{p-1}
\signum(g_j)\theta_j\right|}\cr
&=&\frac{\E S^{p-1}}{\E
S^{p+1}}\frac{\sum_{i=1}^n\theta_i^2\E_gg_i^2\left(\sum_{j=1}^n|g_j|^{2p-2}
\theta_j^2\right)^\frac{1}{2}}{\E_g\left|\sum_{j=1}^n|g_j|^{p-1}
\signum(g_j)\theta_j\right|}\cr
&\leq&\frac{\E S^{p-1}}{\E
S^{p+1}}\frac{\sum_{i=1}^n\theta_i^2\E_gg_i^2\left(|g_i|^{p-1}|\theta_i|+\left(\sum_{j\neq i}|g_j|^{2p-2}
\theta_j^2\right)^\frac{1}{2}\right)}{\E_g\left|\sum_{j=1}^n|g_j|^{p-1}
\signum(g_j)\theta_j\right|}\cr
&=&\frac{\E S^{p-1}}{\E
S^{p+1}}\frac{\sum_{i=1}^n\theta_i^2\left(\E_g|g_i|^{p+1}|\theta_i|+\E_gg_i^2\E_g\left(\sum_{j\neq i}|g_j|^{2p-2}
\theta_j^2\right)^\frac{1}{2}\right)}{\E_g\left|\sum_{j=1}^n|g_j|^{p-1}
\signum(g_j)\theta_j\right|}\cr
&\leq& \frac{\E S^{p-1}}{\E
S^{p+1}}\sum_{i=1}^n\theta_i^2(C_1|\theta_i|+C_2)\leq C \frac{\E S^{p-1}}{\E
S^{p+1}}.
\end{eqnarray*}
By Lemma \ref{LemmagS} we have $\frac{\E S^{p-1}}{\E
S^{p+1}}\sim\frac{1}{n^\frac{2}{p}}$. Thus
\begin{eqnarray*}
\frac{1}{\E\psi}\E\left\langle\frac{G}{S},\theta\right\rangle^2\psi
&\leq&\frac{C}{n^\frac{2}{p}}.
\end{eqnarray*}
Also, as before,
\begin{eqnarray*}
 \frac{1}{\E\psi}\E\left|\frac{G}{S}
\right|^2\psi&=&\frac{\E S^{p-1}}{\E S^{p+1}}\frac{1}{\E\psi}\sum_{i=1}^n
\E g_i^2\psi\cr
&\leq&\frac{C}{n^{\frac{2}{p}}\E\psi}
\sum_{i=1}^n\left(\E |g_i|^{p+1}|\theta_i|+\E_gg_i^2\E_g\left(\sum_{j\neq i}|g_j|^{2p-2}\theta_j^2\right)^\frac{1}{2}\right)\cr
&\leq&\frac{C}{n^{\frac{2}{p}}}\left(\Vert\theta\Vert_1+ n\right)\cr
&\leq& Cn^{1-\frac{2}{p}}
\end{eqnarray*}
and so
$$
\frac{1}{(\E\psi)^2}\E\left|\frac{G}{S}\right|^2\psi\E\left\langle\frac{G}{S}
,\theta\right\rangle^2\psi\leq Cn^{1-\frac{4}{p}}.$$

\subsection{Upper bound for the first and third term}
~\\
Similarly, by  the independence of $\frac{G}{S}$ and $S$, H\"older's inequality, Khintchine's inequality,  Lemma \ref{LemmagS} and Lemma \ref{BoundLessTerms} we have
\begin{eqnarray*}
&&\frac{1}{\E\psi}\E\left\langle\frac{G}{S},\theta\right\rangle^4\psi=\frac{\E S^{p-1}}{\E S^{p+3}\E\psi}\E_g\E_\varepsilon\left(\sum_{i=1}^ng_i\signum(g_i)\varepsilon_i\theta_i\right)^4\left|\sum_{j=1}^n|g_j|^{p-1}\signum(g_j)\varepsilon_j\theta_j\right|\cr
&\leq&\frac{\E S^{p-1}}{\E S^{p+3}\E\psi}\E_g\left(\E_\varepsilon\left(\sum_{i=1}^ng_i\signum(g_i)\varepsilon_i\theta_i\right)^8\right)^\frac{1}{2}\left(\E_\varepsilon\left|\sum_{j=1}^n|g_j|^{p-1}\signum(g_j)\varepsilon_j\theta_j\right|^2\right)^\frac{1}{2}\cr
&\sim&\frac{\E S^{p-1}}{\E S^{p+3}\E\psi}\E\left(\sum_{i=1}^ng_i^2\theta_i^2\right)^2\left|\sum_{j=1}^n|g_j|^{2p-2}\theta_j^2\right|^\frac{1}{2}\cr
&\leq&\frac{\E S^{p-1}}{\E S^{p+3}\E\psi}\E\sum_{i=1}^ng_i^4\theta_i^2\left|\sum_{j=1}^n|g_j|^{2p-2}\theta_j^2\right|^\frac{1}{2}\cr
&=&\frac{\E S^{p-1}}{\E S^{p+3}\E\psi}\sum_{i=1}^n\theta_i^2\E g_i^4\left|\sum_{j=1}^n|g_j|^{2p-2}\theta_j^2\right|^\frac{1}{2}\cr
&\leq&\frac{\E S^{p-1}}{\E S^{p+3}\E\psi}\sum_{i=1}^n\theta_i^2\E g_i^4\left(|g_i|^{p-1}|\theta_i|+\left(\sum_{j\neq i}|g_j|^{2p-2}\theta_j^2\right)^\frac{1}{2}\right)\cr
&=&\frac{\E S^{p-1}}{\E S^{p+3}\E\psi}\sum_{i=1}^n\theta_i^2\left(\E|g_i|^{p+3}|\theta_i|+\E g_i^4\E\left(\sum_{j\neq i}|g_j|^{2p-2}\theta_j^2\right)^\frac{1}{2}\right)\cr
&\leq&\frac{\E S^{p-1}}{\E S^{p+3}}C\sum_{i=1}^n\theta_i^2\cr
&\leq& C n^{-\frac{4}{p}}
\end{eqnarray*}
since, by Lemma \ref{LemmagS}, $\frac{\E S^{p-1}}{\E S^{p+3}}\sim n^{-\frac{4}{p}}$. This bounds the third term. Besides, this estimate implies the following bound
on the first term:
$$
\sum_{i=1}^n\left(\frac{1}{\E\psi}\E\frac{g_i^4}{S^4}\psi-\left(\frac{1}{\E\psi}
\E\frac{g_i^2}{S^2}\psi
\right)^2\right)\leq\sum_{i=1}^n\frac{1}{\E\psi}\E\frac{g_i^4}{S^4}\psi\leq
Cn^{1-\frac{4}{p}}. $$
\subsection{Upper bound for the second term}
~\\
It remains to bound the second term
$$
\sum_{i\neq
j}\left(\frac{1}{\E \psi}\E\frac{g_i^2g_j^2}{S^4}\psi-\frac{1}{(\E
\psi)^2}\E\frac{g_i^2}{S^2}\psi\E\frac{g_j^2
}{S^2}\psi\right).
$$
For any $i\neq j$ we have
\begin{align*}
\frac{1}{\E \psi}\E\frac{g_i^2g_j^2}{S^4}\psi&-\frac{1}{(\E
\psi)^2}\E\frac{g_i^2}{S^2}\psi\E\frac{g_j^2
}{S^2}\psi=\frac{\E S^{p-1}}{\E S^{p+3}}\frac{\E g_i^2g_j^2\psi}{\E\psi}
-\left(\frac{\E S^{p-1}}{\E S^{p+1}}\right)^2\frac{\E g_i^2\psi\E
g_j^2\psi}{(\E\psi)^2}\\
&= \frac{(\E S^{p-1})^2}{(\E S^{p+1})^2(\E\psi)^2}\left(
\frac{(\E S^{p+1})^2}{\E S^{p-1}\E S^{p+3}}\,\E g_i^2g_j^2\psi\E \psi-
\E g_i^2\psi\E g_j^2\psi
\right).
\end{align*}
By H\"older's inequality, $(\E S^{p+1})^2\leq\E S^{p-1}\E S^{p+3}$. Then, we have
\begin{align*}
\frac{1}{\E \psi}\E\frac{g_i^2g_j^2}{S^4}\psi&-\frac{1}{(\E
\psi)^2}\E\frac{g_i^2}{S^2}\psi\E\frac{g_j^2
}{S^2}\psi\leq
\frac{(\E S^{p-1})^2}{(\E S^{p+1})^2(\E\psi)^2}\left(
\E g_i^2g_j^2\psi\E \psi-
\E g_i^2\psi\E g_j^2\psi
\right).
\end{align*}
Note that if $\{\bar{g}_i\}_{i=1}^n$ are independent copies of $g$, independent
of $\{g_i\}_{i=1}^n$, and $\bar{\psi}=\left|\sum_{i=1}^n
|\bar{g}_i|^{p-1}\signum (\bar{g}_i) \theta_i\right|$, we have that
\begin{eqnarray*}
\E g_i^2g_j^2\psi\E \psi-
\E g_i^2\psi\E
g_j^2\psi&=&\E_{g\otimes\bar{g}}g_i^2(g_j^2-\bar{g}_j^2)\psi\bar{\psi}\cr
&=&\E_{g\otimes\bar{g}}\bar{g}_i^2(\bar{g}_j^2-g_j^2)\psi\bar{\psi}.\cr
\end{eqnarray*}
Thus,
$$
\E g_i^2g_j^2\psi\E \psi-\E g_i^2\psi\E
g_j^2\psi=\frac{1}{2}\E_{g\otimes\bar{g}}(g_i^2-\bar{g}_i^2)(g_j^2-\bar{g}
_j^2)\psi\bar{\psi}
$$
and so,
\begin{align*}
&\sum_{i\neq j}\frac{1}{\E \psi}\E\frac{g_i^2g_j^2}{S^4}\psi-\frac{1}{(\E
\psi)^2}\E\frac{g_i^2}{S^2}\psi\E\frac{g_j^2
}{S^2}\psi\leq\frac{(\E S^{p-1})^2}{2(\E S^{p+1})^2(\E\psi)^2}\E\psi\bar{\psi}\sum_{i\neq j}(g_i^2-\bar{g}_i^2)(g_j^2-\bar{g}
_j^2)\cr
&\leq \frac{(\E S^{p-1})^2}{2(\E S^{p+1})^2(\E\psi)^2}\E\psi\bar{\psi}\left(\sum_{i=1}^n(g_i^2-\bar{g}_i^2)\right)^2\sim\frac{n^{-\frac{4}{p}}}{(\E\psi)^2}\E\psi\bar{\psi}\left(\sum_{i=1}^n(g_i^2-\bar{g}_i^2)\right)^2.\cr
\end{align*}
Now, for any $\alpha\geq 1$, this is bounded by
$$
\leq n^{-\frac{4}{p}}\left(\frac{\E\psi\bar{\psi}\left(\sum_{i=1}^n(g_i^2-\bar{g}_i^2)\right)^{2\alpha}}{(\E\psi)^2}\right)^\frac{1}{\alpha}\leq n^{-\frac{4}{p}}\frac{\left(\E\psi^2\right)^\frac{1}{\alpha}}{\left(\E\psi\right)^\frac{2}{\alpha}}\left(\E\left(\sum_{i=1}^n(g_i^2-\bar{g}_i^2)\right)^{4\alpha}\right)^\frac{1}{2\alpha}.
$$
By Lemma \ref{ExpectationPsi}, $\frac{\left(\E\psi^2\right)^\frac{1}{\alpha}}{\left(\E\psi\right)^\frac{2}{\alpha}}\leq Cp^\frac{1}{\alpha}$ and, taking $\alpha\sim \log p$ we have by Lemma \ref{MomentsSumsSymmetric} that
$$
\left(\E\left(\sum_{i=1}^n(g_i^2-\bar{g}_i^2)\right)^{4\alpha}\right)^\frac{1}{2\alpha}\sim n\log p
$$
and so
$$
\sum_{i\neq
j}\left(\frac{1}{\E \psi}\E\frac{g_i^2g_j^2}{S^4}\psi-\frac{1}{(\E
\psi)^2}\E\frac{g_i^2}{S^2}\psi\E\frac{g_j^2
}{S^2}\psi\right)\leq Cn^{1-\frac{4}{p}}\log(1+p).
$$
Besides, by  Corollary \ref{CorollaryExpectationPsi}, if $1\leq p\leq n$, $\frac{\left(\E\psi^2\right)^\frac{1}{\alpha}}{\left(\E\psi\right)^\frac{2}{\alpha}}\leq C$ for a set of directions of measure greater than $1-\frac{1}{2^n}$. Taking $\alpha=2$ when $\theta$ belongs to this set we obtain
$$
\sum_{i\neq
j}\left(\frac{1}{\E \psi}\E\frac{g_i^2g_j^2}{S^4}\psi-\frac{1}{(\E
\psi)^2}\E\frac{g_i^2}{S^2}\psi\E\frac{g_j^2
}{S^2}\psi\right)\leq Cn^{1-\frac{4}{p}}.
$$
This finishes the proof  in the case $p\leq n$.

In the case that $p>n$ we take into account that, since the random variables $g_i$ are independent and identically distributed
\begin{eqnarray*}
\E\psi\bar{\psi}\left(\sum_{i=1}^n(g_i^2-\bar{g}_i^2)\right)^2&=&\E\psi\bar{\psi}\left(\sum_{i=1}^n(g_i^2-\E g_i^2)+\sum_{i=1}^n(\E\bar{g}_i^2-\bar{g}_i^2)\right)^2\cr
&\leq&\sqrt2\E\psi\bar{\psi}\left[\left(\sum_{i=1}^n(g_i^2-\E g_i^2)\right)^2+\left(\sum_{i=1}^n(\bar{g}_i^2-\E\bar{g}_i^2)\right)^2\right]\cr
&=&2\sqrt2\E\psi\E\psi\left(\sum_{i=1}^n(g_i^2-\E g_i^2)\right)^2\cr
&\leq&2\sqrt2\E\psi\E\sum_{j=1}^n|g_j|^{p-1}|\theta_j|\left(\sum_{i=1}^n(g_i^2-\E g_i^2)\right)^2\cr
&=&2\sqrt2\Vert\theta\Vert_1\E\psi\E|g_1|^{p-1}\left(\sum_{i=1}^n(g_i^2-\E g_i^2)\right)^2\cr
&=&2\sqrt2\Vert\theta\Vert_1\E\psi\E|g_1|^{p-1}\sum_{i=1}^n(g_i^2-\E g_i^2)^2\cr
&\leq&\frac{Cn\Vert\theta\Vert_1\E\psi}{p}.
\end{eqnarray*}
Since by part b) in Lemma \ref{ExpectationPsi}
$$
\E\psi_\theta\geq\frac{c_1}{\sqrt n}\E\psi_{\theta_0}\Vert\theta\Vert_1,
$$
we have that
$$
\frac{n^{-\frac{4}{p}}}{(\E\psi)^2}\E\psi\bar{\psi}\left(\sum_{i=1}^n(g_i^2-\bar{g}_i^2)\right)^2\leq C\frac{\sqrt n}{p\E\psi_{\theta_0}}n^{1-\frac{4}{p}}
$$
and, since $p\geq n$, by Lemma \ref{ExpectationPsiTheta0} $\E\psi_{\theta_0}\sim\frac{\sqrt n}{p}$ and we obtain the result.
\end{proof}

\section{Hyperplane projections of isotropic random vectors and Steiner
symmetrization}\label{proofSteiner}

In this section we will show how the variance conjecture for an isotropic
log-concave random vector relates to the variance conjecture for its hyperplane projections or for
its Steiner symmetrizations (when the vector is uniformly distributed on an isotropic body).
\begin{proposition}
Let $\mu$ be a log-concave probability on $\R^n$ and $X$ a random vector distributed according to $\mu$. Then for any linear subspace
$E$
$$
\left|\sqrt{\textrm{Var}|X|^2}-\sqrt{\textrm{Var}|P_EX|^2}\right|\leq\sqrt{
\textrm{Var}|P_{E^\perp}X|^2}.
$$
\end{proposition}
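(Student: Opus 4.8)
The plan is to use the orthogonal decomposition
\[
|X|^2=|P_EX|^2+|P_{E^\perp}X|^2,
\]
which holds pointwise because $E$ and $E^\perp$ are orthogonal complements, together with the observation that $W\mapsto\sqrt{\textrm{Var}\,W}$ is a seminorm on the space of square-integrable random variables.

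First I would note that since $X$ is log-concave all its polynomial moments are finite (for instance by Borell's lemma), so $|X|^2$, $|P_EX|^2$ and $|P_{E^\perp}X|^2$ are all square-integrable and the three variances in the statement are well defined. Next I would record that $N(W):=\sqrt{\textrm{Var}\,W}=\Vert W-\E W\Vert_{L^2}$ is a seminorm: positive homogeneity is immediate, and the triangle inequality $N(W_1+W_2)\le N(W_1)+N(W_2)$ is exactly Minkowski's inequality applied to the centered variables $W_i-\E W_i$.

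Then I would put $Y=|P_EX|^2$ and $Z=|P_{E^\perp}X|^2$, so that $Y+Z=|X|^2$, and apply the triangle inequality in both directions,
\[
N(Y+Z)\le N(Y)+N(Z),\qquad N(Y)=N\bigl((Y+Z)-Z\bigr)\le N(Y+Z)+N(Z),
\]
which yields $N(Y+Z)-N(Y)\le N(Z)$ and $N(Y)-N(Y+Z)\le N(Z)$, hence $\bigl|N(Y+Z)-N(Y)\bigr|\le N(Z)$. This is precisely the asserted bound
$\bigl|\sqrt{\textrm{Var}|X|^2}-\sqrt{\textrm{Var}|P_EX|^2}\bigr|\le\sqrt{\textrm{Var}|P_{E^\perp}X|^2}$.

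I do not expect any real obstacle: the only point needing attention is integrability of $|X|^4$, which is guaranteed by log-concavity, and the inequality itself is just the reverse triangle inequality for the $L^2$-seminorm, so in fact it holds for an arbitrary random vector with finite fourth moment.
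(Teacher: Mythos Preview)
Your proof is correct and in essence uses the same ingredients as the paper: the orthogonal decomposition $|X|^2=|P_EX|^2+|P_{E^\perp}X|^2$ together with Cauchy--Schwarz/Minkowski. The difference is purely in presentation. The paper expands $\textrm{Var}|X|^2$ in terms of $\textrm{Var}|P_EX|^2$, $\textrm{Var}|P_{E^\perp}X|^2$ and the covariance, rewrites the covariance as $\textrm{Cov}(|P_EX|^2,|X|^2)-\textrm{Var}|P_EX|^2$, bounds $\textrm{Cov}(|P_EX|^2,|X|^2)$ by Cauchy--Schwarz, and then observes that $\sqrt{\textrm{Var}|X|^2}$ lies between the roots $\sqrt{\textrm{Var}|P_EX|^2}\pm\sqrt{\textrm{Var}|P_{E^\perp}X|^2}$ of the resulting quadratic. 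You bypass all of this by recognizing at the outset that $W\mapsto\sqrt{\textrm{Var}\,W}=\Vert W-\E W\Vert_{L^2}$ is a seminorm, so the reverse triangle inequality gives the conclusion in one line. Your route is shorter and makes transparent that log-concavity plays no role beyond guaranteeing finite fourth moments, a point you correctly note.
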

\begin{proof}
For any linear subspace $E$,
$$
|X|^2=|P_{E}(X)|^2+|P_{E^\perp}(X)|^2.
$$
Thus,
\begin{eqnarray*}
\textrm{Var}|X|^2&=&\textrm{Var}|P_EX|^2+\textrm{Var}|P_{E^\perp}X|^2\cr
&+&2\left(\E|P_EX|^2|P_{E^\perp}X|^2-\E|P_EX|^2\E|P_{E^\perp}X|^2\right)\cr
&=&\textrm{Var}|P_EX|^2+\textrm{Var}|P_{E^\perp}X|^2\cr
&+&2\left(\E|P_EX|^2(|X|^2-|P_EX|^2)-\E|P_EX|^2\E(|X|^2-|P_EX|^2)\right)\cr
&=&\textrm{Var}|P_EX|^2+\textrm{Var}|P_{E^\perp}X|^2\cr
&+&2\E|P_EX|^2(|X|^2-\E|X|^2)-2\textrm{Var}|P_EX|^2\cr
&=&\textrm{Var}|P_{E^\perp}X|^2-\textrm{Var}|P_EX|^2\cr
&+&2\E(|P_EX|^2-\E|P_EX|^2)(|X|^2-\E|X|^2)\cr
&\leq&\textrm{Var}|P_{E^\perp}X|^2-\textrm{Var}|P_EX|^2\cr
&+&2\sqrt{\textrm{Var}|P_EX|^2}\sqrt{\textrm{Var}|X|^2}.\cr
\end{eqnarray*}
Consequently,
$$
\textrm{Var}|X|^2-2\sqrt{\textrm{Var}|P_EX|^2}\sqrt{\textrm{Var}|X|^2}-\textrm{
Var}|P_{E^\perp}X|^2+\textrm{Var}|P_EX|^2\leq 0.
$$
Since the roots of the polyomial
$$p(x)=x^2-2\sqrt{\textrm{Var}|P_EX|^2}x-\textrm{Var}|P_{E^\perp}X|^2+\textrm{
Var}|P_EX|^2$$ are
$$
\sqrt{\textrm{Var}|P_EX|^2}\pm\sqrt{\textrm{Var}|P_{E^\perp}X|^2},
$$
we obtain the result.
\end{proof}

As a consequence, we have the following
\begin{thm}\label{projections}
Let $X$ be an isotropic log-concave random vector. Then the following
are equivalent
\begin{itemize}
\item There exists a constant $C_1$ such that $$\textrm{Var}\, |X|^2\leq
C_1n.$$
\item There exists a constant $C_2$ such that $$\textrm{Var}\, |P_EX|^2\leq
C_2(n-1)$$  for some hyperplane $E$.
\item There exists a constant $C_3$ such that $$\textrm{Var}\, |P_EX|^2\leq
C_3(n-1)$$  for every hyperplane $E$,
\end{itemize}
where
$$
C_2\leq C_3\leq 4\left(C_1+\frac{C}{n}\right) \textrm{ and }
C_1\leq2\left(C_2+\frac{C}{n}\right),
$$
with $C$ an absolute constant.
\end{thm}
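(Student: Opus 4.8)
The plan is to apply the Proposition just proved with $E$ a \emph{hyperplane}, so that $E^\perp=\langle\theta\rangle$ is one-dimensional and $|P_{E^\perp}X|^2=\langle X,\theta\rangle^2$. The crux is then to bound $\textrm{Var}\,\langle X,\theta\rangle^2$ by an absolute constant. Since $X$ is isotropic log-concave, $\langle X,\theta\rangle$ is a one-dimensional log-concave random variable with $\E\langle X,\theta\rangle=0$ and $\E\langle X,\theta\rangle^2=1$; by the classical comparison of moments of log-concave variables (Borell's lemma / Berwald's inequality) there is an absolute constant $C$ with $\E\langle X,\theta\rangle^4\leq C(\E\langle X,\theta\rangle^2)^2=C$, hence
$$
\textrm{Var}\,|P_{E^\perp}X|^2=\E\langle X,\theta\rangle^4-1\leq C.
$$
Feeding this into the Proposition, I get, for every hyperplane $E$,
$$
\left|\sqrt{\textrm{Var}\,|X|^2}-\sqrt{\textrm{Var}\,|P_EX|^2}\right|\leq\sqrt{C}.
$$

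From here the three equivalences will follow by elementary manipulations. For $(1)\Rightarrow(3)$, assuming $\textrm{Var}\,|X|^2\leq C_1n$, the displayed inequality gives $\sqrt{\textrm{Var}\,|P_EX|^2}\leq\sqrt{C_1n}+\sqrt C$ for every hyperplane $E$; squaring and using $2\sqrt{C_1nC}\leq C_1n+C$ together with $n\leq 2(n-1)$ (valid for $n\geq2$, the case $n=1$ being trivial) yields $\textrm{Var}\,|P_EX|^2\leq 2n(C_1+C/n)\leq 4(n-1)(C_1+C/n)$, i.e.\ $C_3\leq 4(C_1+C/n)$. The implication $(3)\Rightarrow(2)$ is immediate with $C_2\leq C_3$. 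For $(2)\Rightarrow(1)$, from $\textrm{Var}\,|P_EX|^2\leq C_2(n-1)$ for some hyperplane $E$ one gets $\sqrt{\textrm{Var}\,|X|^2}\leq\sqrt{C_2(n-1)}+\sqrt C\leq\sqrt{C_2n}+\sqrt C$, and squaring in the same way gives $\textrm{Var}\,|X|^2\leq 2n(C_2+C/n)$, i.e.\ $C_1\leq 2(C_2+C/n)$.

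I do not expect a genuinely hard step here: the only nontrivial input is the uniform bound $\textrm{Var}\,\langle X,\theta\rangle^2\leq C$, a standard fact about one-dimensional marginals of log-concave measures, and the rest is the Proposition plus AM--GM bookkeeping of the constants. The one point that needs care is that the Proposition must be used with a hyperplane, so that the complementary projection is one-dimensional and $\textrm{Var}\,|P_{E^\perp}X|^2$ is genuinely bounded independently of $n$; for a subspace $E$ of large codimension this variance would itself grow with the codimension and the comparison would break down.
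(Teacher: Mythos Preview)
Your proof is correct and follows essentially the same route as the paper's: apply the Proposition with $E$ a hyperplane so that $|P_{E^\perp}X|^2=\langle X,\theta\rangle^2$, invoke Borell's inequality to bound $\E\langle X,\theta\rangle^4$ by an absolute constant, and then square the resulting inequality between $\sqrt{\textrm{Var}\,|X|^2}$ and $\sqrt{\textrm{Var}\,|P_EX|^2}$ using $(a+b)^2\leq 2(a^2+b^2)$ to obtain the stated constants. The paper's argument is identical in structure and in the bookkeeping of the constants.
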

\begin{proof}
Let $E=\theta^\perp$ be a hyperplane and $X$ an isotropic log-concave random vector. Since $X$ is isotropic, also $P_EX$ is isotropic. Thus, if
$X$ verifies the variance conjecture with constant $C_1$ then, for every
hyperplane $E=\theta^\perp$
\begin{eqnarray*}
\sqrt{\textrm{Var}|P_EX|^2}&\leq&\sqrt{\textrm{Var}|X|^2}+\sqrt{\textrm{Var}
\langle X,\theta\rangle^2}\cr
&\leq&\sqrt{C_1n}+\sqrt{\E\langle X,\theta\rangle^4}\cr
&\leq&\sqrt 2\sqrt{C_1n+\E\langle X,\theta\rangle^4}.\cr
\end{eqnarray*}
By Borell's inequality
$$
\sqrt{\textrm{Var}|P_EX|^2}\leq\sqrt2\sqrt{C_1n+C^\prime}=\sqrt2\sqrt{C_1+\frac{
C}{n}}\sqrt n.
$$
Thus, there exists an absolute constant $C$ such that
$$
\textrm{Var}|P_EX|^2\leq
2\left(C_1+\frac{C}{n}\right)n\leq4\left(C_1+\frac{C}{n}\right)(n-1).
$$
In the same way, if there exists a hyperplane $E=\theta^\perp$ such that
$\textrm{Var}\,|P_EX|^2\leq C_2(n-1)$ , then
\begin{eqnarray*}
\sqrt{\textrm{Var}|X|^2}&\leq&\sqrt{\textrm{Var}|P_EX|^2}+\sqrt{\textrm{Var}
\langle X,\theta\rangle^2}\cr
&\leq&\sqrt{C_2(n-1)}+\sqrt{\E\langle X,\theta\rangle^4}\cr
&\leq&\sqrt 2\sqrt{C_2(n-1)+\E\langle X,\theta\rangle^4}\cr
\end{eqnarray*}
and, by Borell's inequality,
$$
\sqrt{\textrm{Var}|X|^2}\leq\sqrt2\sqrt{C_2n+C}=\sqrt2\sqrt{C_2+\frac{C}{n}}
\sqrt n.
$$
Thus, there exists an absolute constant $C$ such that
$$
\textrm{Var}|X|^2\leq 2\left(C_2+\frac{C}{n}\right)n.
$$
\end{proof}

Now we will prove Theorem \ref{SteinerSymmetrization}. It will be a consequence
of the following
\begin{proposition}
Let $K$ be an isotropic convex body, $\theta\in S^{n-1}$ and $S_\theta(K)$ its
Steiner symmetrization with respect to the hyperplane $H=\theta^\perp$. Let $Y$
be a random vector uniformly distributed on $S_\theta(K)$ and $X$ a random
vector uniformly distributed on $K$. Then there exists an absolute constant $C$
such that
$$
\left|\textrm{Var}|Y|^2-\textrm{Var}|X|^2\right|\leq CnL_K^4.
$$
\end{proposition}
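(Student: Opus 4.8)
The plan is to realize $X$ and $Y_\theta$ on a common probability space so that $|X|^2$ and $|Y_\theta|^2$ differ by an explicit, easily controlled random variable. Write $H=\theta^\perp$ and, for $x$ in the relative interior of $P_H(K)$, let $[a(x),b(x)]=\{t\in\R:x+t\theta\in K\}$ be the (interval) fibre of $K$ over $x$, with midpoint $m(x)=\frac12(a(x)+b(x))$ and length $\ell(x)=b(x)-a(x)$. By Fubini the marginal of $P_HX$ on $P_H(K)$ has density $x\mapsto\ell(x)$, and since $S_\theta(K)$ has the same projection onto $H$ and the same fibre lengths, this is also the density of the marginal of $P_HY_\theta$. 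Hence we may sample $x$ from this common marginal $\mu$ and then, conditionally on $x$, sample $s$ uniformly on $[-\ell(x)/2,\ell(x)/2]$; one checks that $X:=x+(m(x)+s)\theta$ is uniform on $K$ while $Y_\theta:=x+s\theta$ is uniform on $S_\theta(K)$. On this space $|X|^2=|x|^2+(m(x)+s)^2$ and $|Y_\theta|^2=|x|^2+s^2$, so
$$D:=|X|^2-|Y_\theta|^2=m(x)^2+2m(x)s.$$

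Next, from $|X|^2=|Y_\theta|^2+D$ I would use the identity
$$\textrm{Var}\,|X|^2-\textrm{Var}\,|Y_\theta|^2=\textrm{Var}\,D+2\,\textrm{Cov}\bigl(|Y_\theta|^2,D\bigr),$$
and reduce to bounding the two terms on the right by $CnL_K^4$. The \emph{key structural observation} is that, conditionally on $x$, the variable $s$ is symmetric, so every term odd in $s$ has zero conditional mean. Thus $\E[s\mid x]=\E[s^3\mid x]=0$, which gives $\E D^2=\E m^4+4\,\E m^2s^2$ (so $\textrm{Var}\,D\le\E m^4+4\,\E m^2s^2$), and, expanding $\textrm{Cov}(|Y_\theta|^2,D)=\textrm{Cov}(|x|^2+s^2,\,m^2+2ms)$, the two cross terms $\textrm{Cov}(|x|^2,2ms)$ and $\textrm{Cov}(s^2,2ms)$ vanish, leaving $\textrm{Cov}(|Y_\theta|^2,D)=\textrm{Cov}(|x|^2,m^2)+\textrm{Cov}(s^2,m^2)$.

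Then it remains to collect a few moment estimates. Since $m(x)=\E[\langle X,\theta\rangle\mid P_HX=x]$, conditional Jensen and Borell's inequality give $\E m^4\le\E\langle X,\theta\rangle^4\le C(\E\langle X,\theta\rangle^2)^2=CL_K^4$. Because $s$ has conditional mean zero, $\E m^2+\E s^2=\E\langle X,\theta\rangle^2=L_K^2$, so $\E s^2\le L_K^2$; as $\langle Y_\theta,\theta\rangle=s$ is a linear functional of a log-concave vector, Borell's inequality again yields $\E s^4\le C(\E s^2)^2\le CL_K^4$. Finally, in the isotropic coordinates of $K$ the coordinatewise reverse Hölder inequality gives $\E X_i^4\le CL_K^4$, hence $\E|P_HX|^4\le\E|X|^4=\sum_{i,j}\E X_i^2X_j^2\le Cn^2L_K^4$. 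Using the Cauchy--Schwarz bound $|\textrm{Cov}(U,V)|\le(\E U^2)^{1/2}(\E V^2)^{1/2}$ we conclude $\textrm{Var}\,D\le CL_K^4$, $|\textrm{Cov}(s^2,m^2)|\le(\E s^4)^{1/2}(\E m^4)^{1/2}\le CL_K^4$ and $|\textrm{Cov}(|x|^2,m^2)|\le(\E|P_HX|^4)^{1/2}(\E m^4)^{1/2}\le CnL_K^4$; adding these bounds gives $\bigl|\textrm{Var}\,|X|^2-\textrm{Var}\,|Y_\theta|^2\bigr|\le CnL_K^4$.

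The main points requiring care are: (i) checking that the coupling above really has the stated marginals, i.e. that the fibres of $K$ and of $S_\theta(K)$ over each $x\in P_H(K)$ are intervals of equal length and that the projection density is exactly $\ell$; and (ii) noticing that the only estimate costing a genuine power of $n$ is the trivial fourth-moment bound $\E|X|^4\le Cn^2L_K^4$, whereas the "vertical" contributions $m$ and $s$ are of size $L_K^4$ only — this is precisely what keeps the error at $O(nL_K^4)$ rather than $O(n^2L_K^4)$. The rest is bookkeeping with conditional symmetry and Cauchy--Schwarz.
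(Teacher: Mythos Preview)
Your argument is correct. The coupling you construct is valid (the common marginal on $P_H(K)$ with density $\ell$, together with the shared uniform fibre variable $s$, does give the stated laws for $X$ and $Y_\theta$), and the identity $\textrm{Var}\,|X|^2-\textrm{Var}\,|Y_\theta|^2=\textrm{Var}\,D+2\,\textrm{Cov}(|Y_\theta|^2,D)$ together with the conditional symmetry of $s$ and the Borell/Cauchy--Schwarz bounds indeed yields $CnL_K^4$.

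The paper takes a different, more ``coordinatewise'' route: it fixes $\theta=e_n$, expands $\textrm{Var}\,|Y|^2$ and $\textrm{Var}\,|X|^2$ as sums of moments $\E\langle\cdot,e_i\rangle^4$, $(\E\langle\cdot,e_i\rangle^2)^2$ and cross terms $\E\langle\cdot,e_i\rangle^2\langle\cdot,e_j\rangle^2$, and then compares each block separately, using that all purely ``horizontal'' moments ($i,j\neq n$) coincide while the moments involving $e_n$ only decrease under Steiner symmetrization (the inequality $\int_{-l}^{l}t^{2k}\,dt\le\int_a^b t^{2k}\,dt$ when $b-a=2l$). The leftover differences are then bounded using isotropy and Borell's inequality. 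Your coupling packages the same phenomenon more compactly: writing $\langle X,\theta\rangle=m+s$ with $s$ symmetric isolates exactly the part Steiner symmetrization kills (namely $m$), so all the term-by-term monotonicity statements of the paper become the single identity $D=m^2+2ms$. What your approach buys is a cleaner structure and no need for the monotonicity observations or for distinguishing the two directions of the inequality; what the paper's approach buys is that it avoids introducing the coupling and works directly from the moment expansion, which may feel more elementary. Both hinge on the same two ingredients --- that $P_HX$ and $P_HY_\theta$ have the same law, and that the vertical part is controlled at order $L_K^4$ by Borell --- and both pay the factor $n$ only through the trivial bound on $\E|P_HX|^4$ (equivalently, the sum over $i$ of cross terms in the paper).
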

\begin{proof}
Without loss of generality we can assume that $\theta=e_n$. We have that
\begin{eqnarray*}
\textrm{Var}|Y|^2&=&\sum_{i=1}^n\left(\E\langle Y,e_i\rangle^4-(\E\langle
Y,e_i\rangle^2)^2\right)\cr
&+&\sum_{i\neq j}\left(\E\langle Y,e_i\rangle^2\langle Y,e_j\rangle^2-\E\langle
Y,e_i\rangle^2\E\langle Y,e_j\rangle^2\right).
\end{eqnarray*}
Notice that if $i\neq n$
\begin{eqnarray*}
\E\langle Y,e_i\rangle^4&=&\int_{P_H(K)}\langle
y,e_i\rangle^4|S_\theta(K)\cap(y+\langle e_n\rangle)|dy\cr
&=&\int_{P_H(K)}\langle y,e_i\rangle^4|K\cap(y+\langle e_n\rangle)|dy
=\E\langle X,e_i\rangle^4.
\end{eqnarray*}
If $i=n$ and for every $y\in P_H(K)$ we have that $K\cap (y+\langle e_n\rangle)$
is the segment $[a(y),b(y)]e_n$, which has length $2l(y)$
\begin{eqnarray*}
\E\langle
Y,e_n\rangle^4&=&\int_{P_H(K)}\int_{-l(y)}^{l(y)}t^4dtdy\leq\int_{P_H(K)}\int_{
a(y)}^{b(y)}t^4dtdy\cr
&=&\E\langle X,e_n\rangle^4.
\end{eqnarray*}
In the same way, if $i\neq n$
$$
\E\langle Y,e_i\rangle^2=\E\langle X,e_i\rangle^2
$$
and if $i=n$
$$
\E\langle Y,e_n\rangle^2\leq\E\langle X,e_n\rangle^2.
$$
Besides, if $i,j\neq n$
$$
\E\langle Y,e_i\rangle^2\langle Y,e_j\rangle^2=\E\langle X,e_i\rangle^2\langle
X,e_j\rangle^2
$$
and if $i\neq n$
\begin{eqnarray*}
\E\langle Y,e_i\rangle^2\langle Y,e_n\rangle^2&=&\int_{P_H(K)}\langle
y,e_i\rangle^2\int_{-l(y)}^{l(y)}t^2dtdy\leq \int_{P_H(K)}\langle
y,e_i\rangle^2\int_{a(y)}^{b(y)}t^2dtdy\cr
&=&\E\langle X,e_i\rangle^2\langle X,e_n\rangle^2.
\end{eqnarray*}
Thus,
\begin{eqnarray*}
\textrm{Var}|Y|^2=\textrm{Var}|X|^2&+&\E\langle Y,e_n\rangle^4-\E\langle
X,e_n\rangle^4\cr
&+&(\E\langle X,e_n\rangle^2)^2-(\E\langle Y,e_i\rangle^2)^2\cr
&+&2\sum_{i=1}^{n-1}\E\langle Y,e_i\rangle^2\langle Y,e_n\rangle^2-\E\langle
X,e_i\rangle^2\langle X,e_n\rangle^2\cr
&+&2\sum_{i=1}^{n-1}\E\langle X,e_i\rangle^2(\E\langle X,e_n\rangle^2-\E\langle
Y,e_n\rangle^2).\cr
\end{eqnarray*}
Consequently
\begin{eqnarray*}
\textrm{Var}|Y|^2&\leq&\textrm{Var}|X|^2\cr
&+&(\E\langle X,e_n\rangle^2)^2-(\E\langle Y,e_i\rangle^2)^2\cr
&+&2\sum_{i=1}^{n-1}\E\langle X,e_i\rangle^2(\E\langle X,e_n\rangle^2-\E\langle
Y,e_n\rangle^2).\cr
\end{eqnarray*}
Now, if $K$ is isotropic
\begin{eqnarray*}
\textrm{Var}|Y|^2&\leq&\textrm{Var}|X|^2\cr
&+&L_K^4-(\E\langle Y,e_i\rangle^2)^2\cr
&+&2(n-1)L_K^2(L_K^2-\E\langle Y,e_n\rangle^2)\cr
&\leq&\textrm{Var}|X|^2´+(2n-1)L_K^4.
\end{eqnarray*}

On the other hand, by H\"older's inequality and Borell's lemma
\begin{eqnarray*}
\textrm{Var}|Y|^2&\geq&\textrm{Var}|X|^2-\E\langle X,e_n\rangle^4\cr
&-&2\sum_{i=1}^{n-1}\E\langle X,e_i\rangle^2\langle X,e_n\rangle^2\cr
&\geq&\textrm{Var}|X|^2-\E\langle X,e_n\rangle^4\cr
&-&2\sum_{i=1}^{n-1}\E(\langle X,e_i\rangle^4)^\frac{1}{2}\E(\langle
X,e_n\rangle^4)^\frac{1}{2}\cr
&\geq&\textrm{Var}|X|^2-C(\E\langle X,e_n\rangle^2)^2\cr
&-&C\sum_{i=1}^{n-1}\E\langle X,e_i\rangle^2\E\langle X,e_n\rangle^2\cr
\end{eqnarray*}
Thus, if $K$ is isotropic
$$
\textrm{Var}|Y|^2\geq\textrm{Var}|X|^2-CnL_K^4.
$$
\end{proof}
As a consequence, we have Theorem \ref{SteinerSymmetrization}:
\begin{proof}[Proof of Theorem \ref{SteinerSymmetrization}]
Let $K$ be an isotropic convex body and let $Y_\theta$ be a random vector on
$S_\theta(K)$. Then $$\lambda_{Y_\theta}^2=L_K^2$$ and
$$
\E|Y_\theta|^2=(n-1)L_K^2+\E\langle Y_\theta,\theta\rangle^2.
$$
Thus $(n-1)L_K^2\leq\E|Y_\theta|^2\leq nL_K^2$ and so, by the previous
proposition, if $X$ verifies the variance conjecture with constant $C_1$ then
for any $\theta\in S^{n-1}$
$$
\textrm{Var}\,|Y_\theta|^2\leq \textrm{Var}\,|X|^2+CnL_K^4\leq
(C_1+C)nL_K^4\leq2(C_1+C)\lambda_{Y_\theta}^2\E|Y|^2
$$
and if for some $\theta\in S^{n-1}$ $Y_\theta$ verifies the variance
conjecture with constant $C_2$ then
$$
\textrm{Var}\,|X|^2\leq \textrm{Var}\,|Y_\theta|^2+CnL_K^4\leq
(C_2+C)nL_K^4=(C_2+C)\lambda_{X}^2\E|X|^2.
$$
\end{proof}

\section{Acknowledgements}
We would like to thank the anonymous referees for several useful comments that helped us to shorten the proofs of some lemmas and improve the presentation of the paper.

\end{document}